\crefname{equation}{}{} 
\setlist[enumerate,1]{label=\arabic*., ref=\arabic*}  %
\numberwithin{equation}{section}   
\theoremstyle{plain}
\newtheorem{theorem}{Theorem}[section]
\newtheorem{proposition}[theorem]{Proposition}
\newtheorem{lemma}[theorem]{Lemma}
\newtheorem{assumption}{Assumption}
\theoremstyle{definition}
\theoremstyle{remark}
\newtheorem{remark}[theorem]{Remark}
\renewcommand\paragraph{%
  \@startsection{paragraph}{4}{0pt}%
    {3.25ex \@plus1ex \@minus.2ex}%
    {1.5ex  \@plus.2ex}%
    {\normalfont\normalsize\bfseries}%
}
\newcommand{\R}{\mathbb{R}}
\begin{document}

\title[Quantitative Scattering on Asymptotically Flat Spacetimes]{Quantitative Scattering for the Energy-Critical Wave Equation on Asymptotically Flat Spacetimes}
\author[B. Dodson]{Benjamin Dodson}
\author[S. Looi]{Sam Looi}

\date{\today}
\begin{abstract}
We prove quantitative scattering for the three-dimensional defocusing energy-critical quintic wave equation on a class of asymptotically flat, possibly non-stationary perturbations of Minkowski space, by establishing the first explicit global $L^8_{t,x}$ bound in this variable-coefficient setting. Earlier work in this setting proved scattering only qualitatively. For
\[
Pu=u^5,\qquad P=\partial_\alpha(g^{\alpha\beta}\partial_\beta),
\]
we show that, under smallness, decay, and regularity assumptions on the metric, and assuming a priori $\dot H^5\times\dot H^4$ and $L^2\times\dot H^{-1}$ bounds on the solution, the critical spacetime norm $\|u\|_{L^8_{t,x}(\mathbb R\times\mathbb R^3)}$ satisfies an explicit exponential-type estimate in terms of the energy and the a priori bound. This upgrades the qualitative scattering theory in this setting to a quantitative one.

The main difficulty is to control geometric error terms over long times. We handle this by splitting the Duhamel history into recent past and remote past. For the recent past, we prove a variable-coefficient interaction Morawetz estimate that yields, on every sufficiently long interval, a time at which the recent nonlinear contribution is small. For the remote past, we prove a dispersive estimate from integrated local energy decay together with a transfer of pointwise decay from large radius to large time. Combining these estimates gives the explicit global $L^8_{t,x}$ bound.
\end{abstract}

\maketitle

\section{Introduction}\label{sec:intro}

\subsection*{1.  Background}

We study the long-time dynamics of the defocusing, energy-critical quintic wave equation on a mildly perturbed Minkowski background,
\begin{equation}
    \label{eq:IVP}
    Pu=u^{5},\qquad P=\partial_\alpha \bigl(g^{\alpha\beta}\partial_\beta\bigr),\qquad u[0]\in\dot H^{1}\times L^{2},
\end{equation}
where $g^{\alpha\beta}(t,x)=m^{\alpha\beta}+h^{\alpha\beta}(t,x)$ is an  asymptotically flat, potentially non-stationary, perturbation of the Minkowski metric that satisfies the smallness and regularity assumptions \eqref{eq:hyp}. %

While global well-posedness and scattering are classical in the Minkowski setting ($h=0$), and quantitative bounds on the $L_{t,x}^8$ norm were recently established \cite{Dodson2024}, which built on the work of \cite{nakanishi1999a,tao2006}, the situation on asymptotically flat backgrounds has remained qualitative \cite{Looi_Tohaneanu_2021}. The transition to a quantitative theory has been obstructed by the difficulty of controlling the global error terms arising from the non-trivial spacetime geometry. This paper provides the first quantitative bound on the $L_{t,x}^8$ norm in this variable-coefficient setting. Our main theorem provides an explicit, exponential-type bound, thereby establishing quantitative scattering.

In the flat case $g=m$, the global Cauchy theory and scattering in the energy space are classical. The equation is energy critical because the conserved energy is invariant under the scaling $u(t,x)\mapsto \lambda^{1/2}u(\lambda t,\lambda x)$. Local well--posedness was proved by \cite{segal1963}. Scattering was proved in \cite{pecher1984} and \cite{rauch1981} for small energy initial data. For large, radially symmetric initial data, \cite{struwe1988} proved global well--posedness for $(\ref{eq:IVP})$ in the flat case for radially symmetric initial data. Later, \cite{ginibre1992} obtained a bound on the $L_{t,x}^{8}$ norm of the solution in the radially symmetric case, proving scattering. For non-symmetric initial data, \cite{grillakis1990} proved global well--posedness and persistence of regularity for smooth initial data. Later, \cite{shatah-struwe1994} proved global well-posedness for initial data in the energy space, and \cite{bahouri1999} proved a qualitative bound on the $L_{t,x}^{8}$ norm of a solution in the flat case. The paper \cite{nakanishi1999a} proved quantitative bounds for both the wave equation and the Klein--Gordon equation, see also \cite{tao2006}.

Building on the work of \cite{nakanishi1999a} and \cite{tao2006}, \cite{Dodson2024} obtained an explicit global bound for the critical Strichartz norm $\|u\|_{L^8_{t,x}}$ on Minkowski, depending on the conserved energy and an a priori high-regularity control. In this paper we extend this quantitative framework to the variable-coefficient setting. In comparison, for the asymptotically flat backgrounds of the type assumed here, \cite{Looi_Tohaneanu_2021} proved integrated local energy decay and qualitative scattering, while \cite{Looi2021} established sharp pointwise decay for the linear flow, but these results do not yield an explicit $L^8_{t,x}$ control. To supply such a bound, we construct an interaction Morawetz functional adapted to $P$, control the curvature-induced error terms with a variable-coefficient dispersive toolkit in the spirit of \cite{Looi2021}, along with the Strichartz estimates constructed by \cite{MetcalfeTataruParametrices}, and deduce an explicit, exponential-type bound for $\|u\|_{L^8_{t,x}}$ under \eqref{eq:hyp} and a priori high-regularity control, thereby upgrading qualitative scattering to a quantitative statement for a class of asymptotically flat spacetimes.

More precisely, \cite{Dodson2024} proved the explicit bound
\[
   \|u\|_{L^{8}_{t,x}(\mathbb R\times\mathbb R^{3})}
   \le  C(E)\exp\bigl(C\,E^{85/6+13/14}A^{11}\bigr),
\]
where $E$ is the conserved energy and $A$ measures higher regularity of the initial data.  
Such an estimate is useful in applications that require more than the mere finiteness of the Strichartz norm. The assumption utilized in \cite{Dodson2024} can be found in \cref{assumption_to_be_improved}.

The aim of this paper is to extend that quantitative theory to a class of \textbf{variable-coefficient} wave operators.
Throughout we assume
\begin{assumption}\label{assump:h}
We assume the metric perturbation $h$ satisfies the following conditions for sufficiently small $\varepsilon>0$, and some constants $\gamma>0, \delta>0$. Here $\underline{L} = \sum_{i = 1}^{3} \frac{x^{i}}{|x|} \partial_{x_{i}} - \partial_{t}$ denotes the incoming null vector field.
\begin{subequations}\label{eq:hyp}
\begin{align}
   |h(t,x)| &\le \varepsilon
               \frac{\langle t-|x|\rangle^{1/2}}
                    {\langle x\rangle^{\gamma}\langle t+|x|\rangle^{1/2}}, \label{eq:hyp-a}\\
   |h^{\underline L\underline L}(t,x)|
            &\le\varepsilon
               \frac{\langle t-|x|\rangle}
                    {\langle x\rangle^{\gamma}\langle t+|x|\rangle},\label{eq:hyp-b}\\
   |\partial^J h^{\mu\nu}(t,x)| &\le \varepsilon\,\langle x\rangle^{-1-\gamma} \qquad\text{for all } 1 \le |J|\le N+1=5,\label{eq:hyp-c} \\
    |h(t,x)|
            &\le 
               \langle x\rangle^{-3-\delta},\qquad \delta>0 \text{ arbitrarily small},\label{eq:hyp-d}
\end{align}
\end{subequations}
for some small $\gamma>0$ and $\varepsilon>0$. Here, $h^{\underline{L} \underline{L}} = h^{\alpha \beta} \underline{L}_{\alpha} \underline{L}_{\beta}$. We also assume
    \begin{equation}
        h^{0j}(t,x) = 0 \quad \text{for } j=1,2,3.
    \end{equation}
This assumption is only needed for the proof of the interaction Morawetz estimate (in \cref{sec:interaction_Morawetz}).
\end{assumption}
    
Assumption \cref{eq:hyp-d} is a short-range decay condition used only in the dispersive estimate (see \cref{sec:a_dispersive_estimate}). In contrast, \cref{eq:hyp-a,eq:hyp-b} allow a long-range tail for the metric $h$ (e.g. $\langle x\rangle^{-\gamma}$), while \cref{eq:hyp-c} imposes short-range decay on $\partial h$. 
Our assumptions, apart from the short-range condition \eqref{eq:hyp-d} required for the dispersive estimate, place our proof within the ``long-range metric, short-range derivatives'' framework of \cite{Looi_Tohaneanu_2021}, from which we derive the necessary integrated local energy decay estimates.

The collection of geometric hypotheses in this paper is tailored to the goal of proving quantitative scattering bounds, and it is therefore distinct from the structure used for the sharp pointwise decay estimates in \cite{Looi2021}. Specifically, where \cite{Looi2021} imposes uniform decay with full Klainerman vector-field control (referred to there as $h \in S^Z(\langle r\rangle^{-1-\sigma})$), this paper uses only control of standard partial derivatives, making it geometrically more permissive. Furthermore, while \cite{Looi2021} assumes local energy decay as a black-box hypothesis, we recall ILED under our assumptions (following \cite{Looi_Tohaneanu_2021} for order 0 ILED and \cite{Looi2022} for higher order ILED) and then build the quantitative theory. However, the present paper does not utilize the sharp pointwise decay results of \cite{Looi2021}. Only a certain ``$r$-to-$t$ decay transfer principle'', as explained in 
    As a result, this paper presents a self-contained and geometrically permissive set of assumptions for establishing quantitative scattering. It uses a less structurally demanding set of assumptions than those tailored for sharp pointwise decay, though the two sets of geometric conditions are distinct and not strictly comparable.

\cite{Looi_Tohaneanu_2021}'s main theorem shows that the unique global strong solution to \eqref{eq:IVP} exists and scatters in the energy space; in particular, $\lim_{t\to\pm\infty}\|u(t)\|_{L^6_x}=0$ and $u(t)$ approaches a linear solution in $\dot H^1_x \times L^2_x$. The argument is qualitative and driven by integrated local energy decay, and does not furnish an explicit a~priori, quantitative bound on the Strichartz norm. 

\subsection*{2. Main Result}

The primary contribution of this paper is to supply such a quantitative bound. To achieve this, we consider a class of asymptotically flat spacetimes where the metric perturbation $h^{\alpha\beta}$ satisfies the following set of smallness, regularity, and decay conditions. %

\begin{theorem}[Quantitative Scattering for the Energy-Critical Wave Equation on Asymptotically Flat Spacetimes]\label{thm:main}
Let $(\mathbb{R}^{1+3}, g)$ be a spacetime where the metric $g^{\alpha\beta} = m^{\alpha\beta} + h^{\alpha\beta}$ is a perturbation of the Minkowski metric $m$. Let $P = \partial_{\alpha}(g^{\alpha\beta}\partial_{\beta})$ be the associated d'Alembertian. We assume the metric perturbation $h$ satisfies \cref{assump:h}. Consider the defocusing energy-critical wave equation:
\[
Pu=u^{5},
\]
with initial data $(u_0, u_1) \in \dot{H}^1(\mathbb{R}^3) \times L^2(\mathbb{R}^3)$ satisfying the energy bound
\begin{equation}\label{1.5}
\int \frac{1}{2} |u_{1}|^{2} + \frac{1}{2} |\nabla u_{0}|^{2} + \frac{1}{6} |u_{0}|^{6} dx \leq E.
\end{equation}

Suppose the corresponding solution $u$ satisfies the following a priori high-regularity bound for some finite constant $A$:
\begin{equation}\label{assumption_to_be_improved}
\sup_{t \in \mathbb{R}} \, \big\|(u(t), u_{t}(t))\big\|_{\dot H^{5}(\mathbb{R}^3) \times \dot H^{4}(\mathbb{R}^3)} \leq A, \quad \sup_{t \in \mathbb{R}} \, \big\|(u(t), u_{t}(t))\big\|_{L^2(\mathbb{R}^3) \times \dot H^{-1}(\mathbb{R}^3)} \leq A.
\end{equation}

Then the solution $u$ is globally defined, belongs to the critical Strichartz space $L^8_{t,x}(\mathbb{R}\times\mathbb{R}^3)$, and satisfies the explicit quantitative spacetime bound
\[
\|u\|_{L^8_{t,x}(\mathbb{R}\times\mathbb{R}^3)} \leq C E^{4/7} A \exp\left(C E^{85/6} E^{13/14} A^{11}\right).
\]

Furthermore, the solution scatters to a solution of the linear equation $Pw=0$ as $t\to\pm\infty$.
\end{theorem}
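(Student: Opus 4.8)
The plan is to run a long-time bootstrap in $L^8_{t,x}$ with the variable-coefficient Strichartz estimates of \cite{MetcalfeTataruParametrices} and the integrated local energy decay (ILED) of \cite{Looi_Tohaneanu_2021} (order $0$) and \cite{Looi2022} (higher order) as the linear engine, and to bound the number of Strichartz intervals by an interaction Morawetz budget (the recent history) plus a dispersive budget (the remote past). First I would record the local Cauchy theory and a stability lemma: under \eqref{eq:hyp} the Duhamel map contracts on $L^8_{t,x}(I)\cap C_t(\dot H^1\times L^2)$ on short intervals, finiteness of $\|u\|_{L^8_{t,x}(I)}$ together with \eqref{assumption_to_be_improved} propagates the $\dot H^5\times\dot H^4$ bound (via higher-order ILED and Strichartz), and a small perturbation of the equation yields a small perturbation of the solution. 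Then partition $\R$ into consecutive intervals $\{I_j\}_{j=1}^N$ with $\|u\|_{L^8_{t,x}(I_j)}=\eta_0$ for a fixed $\eta_0=\eta_0(E)$, plus one leftover interval, so that $\|u\|_{L^8_{t,x}(\R)}^8\le (N+1)\eta_0^8$ and everything reduces to an explicit bound on $N$.

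The first main ingredient, carried out in \cref{sec:interaction_Morawetz}, is an interaction Morawetz estimate adapted to $P$. I would form the two-particle functional $M(t)=\iint_{\R^3\times\R^3} a(x-y)\,\big(\text{bilinear momentum/mass density of } u(t,\cdot)\big)\,dx\,dy$ with a suitable weight $a$, differentiate in $t$, and substitute $Pu=u^5$. The leading term is the flat interaction Morawetz density, which is nonnegative and bounds a weighted $L^4$-type spacetime norm of $u$; the remaining terms are curvature errors built from $h$, $\partial h$ and their null contractions. The hypothesis $h^{0j}\equiv0$ removes the most dangerous first-order cross terms; the decay rates \eqref{eq:hyp-a}--\eqref{eq:hyp-c}, in particular the $\langle t-|x|\rangle$-gain for $h^{\underline L\underline L}$ near the light cone in \eqref{eq:hyp-b}, let me absorb an $\varepsilon$-fraction of these errors into the positive term and bound the rest by ILED-controlled quantities. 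The outcome is a global-in-time bound $\int_\R \iint(\text{interaction density})\lesssim E^{3/2}$, hence an a priori control, polynomial in $E$, on the recent history of $u$ near the cone; interpolating this against the energy bound \eqref{1.5} and the high-regularity bound \eqref{assumption_to_be_improved} (Bernstein plus Strichartz) converts it into a bound, polynomial in $E$ and $A$, on the number $N_{\mathrm{Mor}}$ of intervals $I_j$ on which the Duhamel part of $u$ already carries a fixed fraction of $\eta_0$.

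The second ingredient, in \cref{sec:a_dispersive_estimate}, is a dispersive estimate founded on ILED and the short-range decay \eqref{eq:hyp-d}. For an interval $I_j$ far in the future, fix a cutoff time $T_*$ with $T_*\ll\inf I_j$ and write $u$ on $I_j$ as the $P$-linear evolution of $u[T_*]$ plus the Duhamel contribution of $u^5$ over $[T_*,\sup I_j]$. For the linear piece, combining higher-order ILED (to propagate $\dot H^5\times\dot H^4$ using \eqref{assumption_to_be_improved}), the short-range bound \eqref{eq:hyp-d}, and a stationary-phase/ILED iteration in the spirit of \cite{Looi2021}, I would prove a quantitative time-decay estimate — roughly $\|(\text{linear flow of }u[T_*])(t)\|_{L^\infty_x}\lesssim A\,\langle t-T_*\rangle^{-1}$ — which forces the $L^8_{t,x}(I_j)$ norm of the linear piece below $\eta_0/2$ once $\mathrm{dist}(I_j,T_*)$ is large relative to $E$ and $A$; thus only $O(1)$ intervals can have a large linear part, i.e. the remote past is harmless. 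Every remaining interval must then have its Duhamel part carrying $\ge\eta_0/2$, which is precisely the recent-history quantity capped by the interaction Morawetz budget, so $N\le N_{\mathrm{Mor}}+O(1)$. Tracking the powers of $E$ and $A$ through the interpolation and the decay iteration produces the stated exponential-type bound; global existence follows from the local theory once $\|u\|_{L^8_{t,x}}$ is finite, and scattering follows because the Duhamel tail is Cauchy in $\dot H^1\times L^2$ by Strichartz and the finiteness of the $L^8$ norm, with ILED handling the variable-coefficient propagators.

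I expect the interaction Morawetz estimate with variable coefficients to be the main obstacle. Even on Minkowski the interaction Morawetz density is borderline and its positivity is delicate; replacing $\Box$ by $P$ generates error terms that are not manifestly controllable, and it is exactly the combination $h^{0j}\equiv0$, the light-cone gain in \eqref{eq:hyp-b}, and the short-range derivative decay \eqref{eq:hyp-c} that make the $\varepsilon$-absorption and the ILED-coupling work — which is why these are assumed rather than derived. A secondary difficulty, responsible for the size of the exponent, is the quantitative passage from the weighted $L^4$-type Morawetz output and the $L^\infty$-type dispersive output back to the critical $L^8_{t,x}$ norm: this interpolation must be performed with completely explicit constants in $E$ and with $A$ entering only polynomially, which forces careful frequency-localized bookkeeping throughout.
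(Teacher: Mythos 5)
Your proposal correctly identifies the two load-bearing estimates — an interaction Morawetz functional adapted to $P$ with curvature errors absorbed via ILED and \cref{eq:hyp}, and a dispersive estimate built from ILED plus the short-range condition \cref{eq:hyp-d} — and the overall recent-past/remote-past architecture matches the paper (which, for bookkeeping, partitions by the \emph{linear} flow $S(t)(u_0,u_1)$ via \cref{prop:finite_decomp} rather than by the nonlinear solution). However, your treatment of the remote past has a genuine gap. You decompose $u$ on $I_j$ at a single early time $T_*\ll\inf I_j$ and claim $\|S(t,T_*)u[T_*]\|_{L^\infty_x}\lesssim A\,\langle t-T_*\rangle^{-1}$. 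This decay is not available from the hypotheses: the a priori bound \eqref{assumption_to_be_improved} is purely $L^2$-Sobolev, and even for $\Box$ on Minkowski the linear flow of $\dot H^5\times\dot H^4$ data with no spatial integrability does not decay pointwise. The paper's dispersive estimate \eqref{3.4} carries a $W^{4,1}\times W^{3,1}$ norm of the data on the right-hand side, and is therefore applied only to the Duhamel inputs $(0,u^5(s))$, where the quintic power converts the $L^2$-based a priori bounds into the required $L^1$-based control; it is never applied to $u[T_*]$ itself.

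The second half of the same gap is the disposal of the Duhamel tail. Once $T_*\ll\inf I_j$, the term $\int_{T_*}^{t}S(t,\tau)(0,u^5)\,d\tau$ accumulates the nonlinearity over the entire long stretch $[T_*,\inf I_j]$, and this is \emph{not} "precisely the recent-history quantity capped by the interaction Morawetz budget." The variable-coefficient interaction Morawetz here is localized to $|x-y|\lesssim R$ with potential bounded by $E^2R$ (not a clean global $E^{3/2}$ bound), and even after Dodson's averaging over scales $R\in[R_0,e^JR_0]$ with $\mathcal T=e^JR_0$ it only produces a short window $[t_0-T,t_0]$ on which $\|\int_{t_0-T}^{t_0}S(t,\tau)(0,u^5)\,d\tau\|_{L^8_{t,x}}$ is small (\cref{prop:negligible_recent_nonlinear}); it says nothing about the contribution from $[T_*,t_0-T]$. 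The paper closes this by splitting the Duhamel integral at $t_0-T$ and estimating the remote piece $\int_0^{t_0-T}S(t,s)(0,u^5)\,ds$ through the interpolation $\|\cdot\|_{L^8}\le\|\cdot\|_{L^\infty}^{1/2}\|\cdot\|_{L^4}^{1/2}$, with the $L^\infty$ factor $o_T(1)$ from the dispersive estimate applied to the sources $(0,u^5(s))$ and the $L^4$ factor uniformly bounded by Strichartz (\cref{prop:negligible_remote_past}). Without this step your counting $N\le N_{\mathrm{Mor}}+O(1)$ does not close, because an interval can have a large Duhamel part sourced entirely from times far before it.
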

The need for 5 spatial derivatives (and 4 for $u_t$) is explained in \cref{rem:derivative-bookkeeping}. 

As a notational convention, we write $S(t,s)$ for the propagator $S_g(t,s)$ from time $s$ to $t$, and abbreviate $S(t) := S_g(t) := S_g(t,0)$ when the initial time is $0$.

\subsection*{3.  Outline of the argument}

\subsubsection*{Outline of proof of \cref{thm:main}} We partition $\mathbb{R}$ into finitely many intervals $I$ such that $\| S(t)(u_{0}, u_{1}) \|_{L_{t,x}^{8}(I \times \mathbb{R}^{3})}$ is small. This is standard and proved in \cref{prop:finite_decomp}. %

If $|I|$ is not sufficiently large, then we just use $(\ref{assumption_to_be_improved})$ to obtain a bound on $\| u \|_{L_{t,x}^{8}(I \times \mathbb{R}^{3})}$. Indeed, by H{\"o}lder's inequality combined with the Sobolev embedding theorem, $\| u(t) \|_{L_{x}^{8}} \lesssim A$ for any $t \in \mathbb{R}$, so $\| u \|_{L_{t,x}^{8}(I \times \mathbb{R}^{3})} \lesssim |I|^{1/8} A$. 

For any sufficiently long interval from the decomposition provided in \cref{prop:finite_decomp}, \cref{prop:negligible_recent_nonlinear,prop:negligible_remote_past} are used in tandem to find a time $t_0$ where the Duhamel formula can be controlled by a smallness argument, thus bounding the $L_{t,x}^8$ norm on that interval. 
We state this precisely next. If $|I|$ is sufficiently large, 
then there exists some $t_{0} \in I$ such that
\begin{equation}
\| \int_{0}^{t_{0}} S(t, \tau)(0, u^{5}) d\tau \|_{L_{t,x}^{8}([t_{0}, \infty) \times \mathbb{R}^{3})} \ll 1.
\end{equation}
This says that, essentially, on any sufficiently long interval, we can find a ``quiet time'' $t_0$ after which the entire nonlinear history of the solution has a small effect. This is proven by combining two deep estimates: a dispersive estimate and an interaction Morawetz estimate. Partitioning $[0,t_0]$ into two intervals, one called the ``recent past'' and the other the ``remote past'', the dispersive estimate makes the remote past small by choosing $T$ large, while the interaction Morawetz estimate guarantees that on a long interval, one can find a $t_0$ where the recent past is small.

We apply the propagator $S(t, t_0)$ to the Duhamel representation of $(u(t_0), u_t(t_0))$ and use the group property of the flow:
\begin{align*}
S(t, t_0)(u(t_0), u_t(t_0)) &= S(t, t_0)\left( S(t_0)(u_0, u_1) + \int_0^{t_0} S(t_0,\tau)(0, u^5) d\tau \right) \\
&= S(t)(u_0, u_1) + \int_0^{t_0} S(t,\tau)(0, u^5) d\tau.
\end{align*}
The propagator identity $S(t, t_0)(u(t_0), u_t(t_0)) = S(t)(u_0, u_1) + \int_0^{t_0} S(t,\tau)(0, u^5) d\tau$ allows us to control the $L^8_{t,x}$ norm of the left-hand side by two terms. We use the triangle inequality for the $L^8_{t,x}$ norm. In the $L^8_{t,x}$ norm, the first term is small by the definition of the interval partition, while the second is rendered small by the Morawetz and dispersive estimates that justify the choice of $t_0$. %

Thus,
\begin{equation}
\| S(t, t_{0})(u(t_{0}), u_{t}(t_{0})) \|_{L_{t,x}^{8}([t_{0}, \sup(I)) \times \mathbb{R}^{3})} = \varepsilon \ll 1.
\end{equation}
By standard perturbation arguments, \begin{equation}\label{eq:L_eight}
\| u \|_{L_{t,x}^{8}([t_{0}, \sup(I)) \times \mathbb{R}^{3})} \lesssim 1.
\end{equation}
\begin{proof}[Proof of \cref{eq:L_eight}]
By the Strichartz estimate in \cite{MetcalfeTataruParametrices} for the variable-coefficient wave equation,
\begin{equation}
\| S(t, t_{0}) (u(t_{0}), u_{t}(t_{0})) \|_{L_{t}^{4} L_{x}^{12}} \lesssim E(t_{0})^{1/2} \lesssim E(0)^{1/2}.
\end{equation}
This estimate holds under our metric assumptions (as established by \cite{MetcalfeTataruParametrices}). 

Thus we have
\[
\|S(t,t_0)(u(t_0),u_t(t_0))\|_{L^8_{t,x}([t_0,\sup(I))\times \mathbb R^3)}=\varepsilon\ll1,
\qquad
\|S(t,t_0)(u(t_0),u_t(t_0))\|_{L^4_tL^{12}_x}\lesssim E(t_0)^{1/2}.
\]
Interpolating between $(8,8)$ and $(4,12)$ with $\theta=\frac35$ yields
\[
\|S(t,t_0)(u(t_0),u_t(t_0))\|_{L^5_tL^{10}_x}
 \lesssim 
\varepsilon^{2/5}\,E(t_0)^{3/10}.
\]
By inhomogeneous Strichartz and Hölder,
\[
\|u\|_{L^5_tL^{10}_x}
 \lesssim 
\varepsilon^{2/5}E(t_0)^{3/10}
 + 
\||u|^4u\|_{L^{5/4}_tL^{10/9}_x}
 = 
\varepsilon^{2/5}E(t_0)^{3/10}
 + 
\|u\|_{L^5_tL^{10}_x}^{5}.
\]
Similarly, using $(q,r)=(8,8)$ and $(4,12)$ for the output,
\[
\|u\|_{L^8_{t,x}} \lesssim \varepsilon + \|u\|_{L^5_tL^{10}_x}^5,
\qquad
\|u\|_{L^4_tL^{12}_x} \lesssim E(t_0)^{1/2} + \|u\|_{L^5_tL^{10}_x}^5.
\]
If $\varepsilon^{2/5}E(t_0)^{3/10}$ is sufficiently small, a standard bootstrap gives
\[
\|u\|_{L^5_tL^{10}_x}\lesssim \varepsilon^{2/5}E(t_0)^{3/10},
\]
and hence
\[
\|u\|_{L^8_{t,x}}\lesssim \varepsilon + E(t_0)^{3/2}\varepsilon^{2},
\qquad
\|u\|_{L^4_tL^{12}_x}\lesssim E(t_0)^{1/2} + E(t_0)^{3/2}\varepsilon^{2}.
\]
\end{proof}
This proves that \[
\|u\|_{L^8_{t,x}(I\times \R^3)} \lesssim_{E,A}1.
\]
Since there are finitely many intervals $I$, this proves \cref{thm:main}.

\section{Preliminaries, and the remote past}\label{sec:mainproof}
We first partition the time axis $\mathbb{R}$ into a finite number of intervals on which the contribution from the linear evolution is small. The existence of such a partition is a standard consequence of the finiteness of the global Strichartz norm. We state and prove this in the following proposition.

\begin{proposition}[Finite decomposition via Strichartz estimates]\label{prop:finite_decomp}
Let $v(t,x) = S(t)(u_0, u_1)$ be the solution to the linear homogeneous wave equation $\Box_g v = 0$ with initial data $(u_0, u_1)$ satisfying the energy bound \cref{1.5}. Let $C(E)$ be the constant from the global variable-coefficient Strichartz estimate, such that
\begin{equation} \label{eq:global_strichartz_bound}
    \|v\|_{L^8_{t,x}(\mathbb{R} \times \mathbb{R}^3)} \le C(E).
\end{equation}
Then for any $\eta > 0$, the time axis $\mathbb{R}$ can be decomposed into a finite number of intervals, $M = M(E, \eta)$,
\[
    \mathbb{R} = \bigcup_{k=1}^M I_k,
\]
such that on each interval, the solution satisfies
\[
    \|v\|_{L^8_{t,x}(I_k \times \mathbb{R}^3)} \le \eta.
\]
Furthermore, the number of intervals is explicitly bounded by $M = \lceil C(E)^8 \eta^{-8} \rceil$.
\end{proposition}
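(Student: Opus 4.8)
The plan is to reduce the statement to an elementary property of the absolutely continuous measure on $\mathbb{R}$ obtained from the time-slices of the $L^8_{t,x}$ norm. First I would set
\[
    F(t) := \int_{-\infty}^{t} \|v(s,\cdot)\|_{L^8_x(\mathbb{R}^3)}^8 \, ds .
\]
The global variable-coefficient Strichartz bound \eqref{eq:global_strichartz_bound} together with Tonelli's theorem shows that $s \mapsto \|v(s,\cdot)\|_{L^8_x}^8$ is integrable on $\mathbb{R}$, with $\int_{\mathbb{R}} \|v(s)\|_{L^8_x}^8\,ds = \|v\|_{L^8_{t,x}(\mathbb{R}\times\mathbb{R}^3)}^8 \le C(E)^8$. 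Consequently $F$ is well defined, nondecreasing, and continuous (indeed absolutely continuous, being the integral of an $L^1_t$ function), with $F(-\infty) = 0$ and $F(+\infty) \le C(E)^8$.

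Next I would choose the partition points by a greedy equal-mass scheme. Put $M := \lceil C(E)^8 \eta^{-8}\rceil$, let $t_0 = -\infty$, $t_M = +\infty$, and for $1 \le k \le M-1$ define
\[
    t_k := \inf\{\, t \in \mathbb{R} : F(t) \ge k\,\eta^8 \,\},
\]
with the convention $t_k = +\infty$ when this set is empty. Since $F$ is continuous and nondecreasing, whenever $t_k$ is finite one has $F(t_k) = k\eta^8$, and the sequence $(t_k)$ is nondecreasing in $k$. Then $I_1 = (-\infty, t_1)$, $I_k = [t_{k-1}, t_k)$ for $2 \le k \le M-1$, and $I_M = [t_{M-1}, +\infty)$ form a partition of $\mathbb{R}$ into $M$ (possibly empty) intervals.

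It remains to verify the norm bound on each $I_k$. For $1 \le k \le M-1$ with $t_{k-1}, t_k$ finite, $\|v\|_{L^8_{t,x}(I_k\times\mathbb{R}^3)}^8 = F(t_k) - F(t_{k-1}) = k\eta^8 - (k-1)\eta^8 = \eta^8$; if some $t_k = +\infty$, the subsequent intervals are empty and there is nothing to prove. For the terminal interval, $\|v\|_{L^8_{t,x}(I_M\times\mathbb{R}^3)}^8 = F(+\infty) - F(t_{M-1}) \le C(E)^8 - (M-1)\eta^8 \le M\eta^8 - (M-1)\eta^8 = \eta^8$ by the defining property of the ceiling. In every case $\|v\|_{L^8_{t,x}(I_k\times\mathbb{R}^3)} \le \eta$, and the number of intervals is exactly $M = \lceil C(E)^8\eta^{-8}\rceil$; when $C(E) \le \eta$ this degenerates to the single interval $\mathbb{R}$, consistently with \eqref{eq:global_strichartz_bound}. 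I do not expect a genuine obstacle here: the only points requiring care are the finiteness and continuity of $F$ — which is precisely where \eqref{eq:global_strichartz_bound} and Tonelli enter — and the bookkeeping at the two infinite endpoints and in the last interval, which is absorbed by the ceiling in the definition of $M$.
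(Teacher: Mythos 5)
Your proposal is correct and follows essentially the same route as the paper: both define the cumulative distribution function $F(t)=\int_{-\infty}^t\|v(s,\cdot)\|_{L^8_x}^8\,ds$, use its continuity and monotonicity to select partition points at increments of $\eta^8$, and conclude with $M=\lceil C(E)^8\eta^{-8}\rceil$ intervals. Your treatment of the degenerate cases (infinite endpoints, empty intervals) is slightly more explicit than the paper's, but the argument is the same.
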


\begin{proof}
We provide the standard argument, which proceeds by partitioning the cumulative Strichartz integral. Let $B = C(E)$ be the bound from the global Strichartz estimate \eqref{eq:global_strichartz_bound}. We define the cumulative distribution function for the norm as
\[
F(t) := \int_{-\infty}^{t} \|v(\tau, \cdot)\|_{L^8_x}^8 \, d\tau.
\]
The function $F: \mathbb{R} \to [0, B^8]$ is continuous and non-decreasing, with $\lim_{t \to -\infty} F(t) = 0$ and $\lim_{t \to \infty} F(t) = \|v\|_{L^8_{t,x}}^8 \le B^8$.

Given $\eta > 0$, we define the number of intervals as $M := \lceil B^8 / \eta^8 \rceil$. We partition the range $[0, B^8]$ by setting $y_k = k \eta^8$ for $k = 0, 1, \dots, M-1$, and $y_M = B^8$. By the continuity of $F$, we may choose a sequence of time points $-\infty =: t_0 \le t_1 \le \dots \le t_{M-1} \le t_M := \infty$ such that $F(t_k) = y_k$ for $k=1, \dots, M-1$.

This sequence partitions the time axis $\mathbb{R}$ into $M$ intervals $I_k = [t_{k-1}, t_k]$ for $k = 1, \dots, M$. For each interval in this partition, we have
\begin{align*}
\|v\|_{L^8_{t,x}(I_k \times \mathbb{R}^3)}^8 &= \int_{t_{k-1}}^{t_k} \|v(\tau, \cdot)\|_{L^8_x}^8 \, d\tau \\
&= F(t_k) - F(t_{k-1}).
\end{align*}
By construction, the increment $F(t_k) - F(t_{k-1})$ is at most $\eta^8$ for each $k$. Thus $\|v\|_{L^8_{t,x}(I_k \times \mathbb{R}^3)} \le \eta$ for each $k=1, \dots, M$.
\end{proof}

With this partition in hand, the proof of \cref{thm:main} reduces to establishing a bound for $\|u\|_{L^8_{t,x}(I_k)}$ that is uniform in $k$. Let us fix one such interval $I := I_k$, on which we know $\|S(t)(u_0, u_1)\|_{L^8_{t,x}(I)} \le \eta$. The argument then finds a suitable time $t_0 \in I$ from which to control the nonlinear evolution. This is achieved by decomposing the Duhamel formula. For $t \in I$, we write
\begin{equation} \label{eq:duhamel_decomposition}
u(t) = S(t)(u_0, u_1) + \int_{0}^{t} S(t, \tau)(0, u^5(\tau)) \, d\tau.
\end{equation}
The subsequent propositions present an argument showing that for any sufficiently long interval $I$, there must exist a time $t_0 \in I$ where the contribution of the nonlinearity from the remote past ($\tau \ll t_0$) is small. This allows us to treat the solution on $[t_0, \sup(I))$ as a small perturbation of a free wave, yielding the necessary uniform bound on $\|u\|_{L^8(I)}$.

\subsection{The remote past}

\begin{proposition}[Negligibility/smallness of the remote-past contribution] \label{prop:negligible_remote_past}
Assume the hypotheses of \cref{thm:main}. Let $\eta > 0$ be given. There exists $T = T(E, A, \eta) > 0$ sufficiently large such that for any $t_0 >T$,
\begin{equation}
\left\| \int_0^{t_0 - T} S(\cdot,\tau)(0, u^5(\tau)) \, d\tau \right\|_{L^8_{t,x}([t_0, \infty) \times \R^3)} \le \eta.
\end{equation}
\end{proposition}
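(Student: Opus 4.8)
The plan is to estimate the Duhamel integral over the remote window $[0, t_0 - T]$ by splitting the driving nonlinearity $u^5$ into the contribution of a compact "core" region in phase space, where we can exploit dispersion, and a tail region, where we can exploit the a~priori higher-regularity bound \eqref{assumption_to_be_improved} together with the energy bound. First I would recall that, by the variable-coefficient Strichartz estimates of \cite{MetcalfeTataruParametrices} valid under \cref{assump:h}, the propagator $S(t,\tau)$ maps $L^1_\tau$-in-time forcing into $L^8_{t,x}$ with a uniform constant; so the task reduces to showing that the forcing $u^5(\tau)$, as seen from the far future $t \ge t_0 \ge T + \tau$, produces only an $O(\eta)$ contribution. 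The key quantitative inputs are (i) the dispersive decay estimate of \cref{sec:a_dispersive_estimate}, which under the short-range hypothesis \eqref{eq:hyp-d} gives pointwise-in-time decay $\|S(t,\tau)F\|_{L^\infty_x} \lesssim |t-\tau|^{-1}\|F\|_{Y}$ for $F$ in a suitable weighted/higher-regularity space $Y$, and (ii) the integrated local energy decay (ILED) recalled from \cite{Looi_Tohaneanu_2021, Looi2022}, which controls local-in-space norms of $u$ and its derivatives in $L^2_t$.

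The key steps, in order: First, fix a large dyadic frequency/spatial cutoff $R$ and write $u^5(\tau) = P_{\le R}\chi_{\le R}\,u^5(\tau) + (\text{remainder})$, where $\chi_{\le R}$ is a spatial cutoff to $|x| \le R$; the remainder is controlled in an $L^1_\tau L^2_x$-type norm by combining \eqref{assumption_to_be_improved} (which bounds high Sobolev norms of $u$, hence of $u^5$ by the fractional Leibniz rule and Sobolev embedding, uniformly in $\tau$) with the spatial decay of the metric and the fact that ILED forces $\int \|u\|^2_{L^2_x(|x|\le R)}\,d\tau$ and similar quantities to be finite; choosing $R$ large makes this remainder piece $\le \eta/2$ after the Strichartz bound. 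Second, for the localized core piece, use the dispersive estimate: for $t \ge t_0$ and $\tau \le t_0 - T$ we have $|t-\tau| \ge T$, so $\|S(t,\tau)(0,P_{\le R}\chi_{\le R}u^5(\tau))\|_{L^\infty_x} \lesssim T^{-1} B(R,A,E)$, where $B(R,A,E)$ is a polynomial bound on the $Y$-norm of the localized nonlinearity coming from \eqref{assumption_to_be_improved}; interpolating this $L^\infty_x$ decay against the uniform energy/Strichartz bound on the same piece, and integrating $d\tau$ over $[0,t_0-T]$ (the integral $\int_0^{t_0-T}(t-\tau)^{-\alpha}d\tau$ converging once we interpolate to get a power $\alpha > 1$, or absorbing a logarithmic loss into the choice of $T$), yields an $L^8_{t,x}([t_0,\infty))$ bound of the form $C(R,A,E)\,T^{-\beta}$ for some $\beta > 0$. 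Third, choose $T = T(E,A,\eta)$ so that $C(R,A,E)T^{-\beta} \le \eta/2$; combined with the first step this gives the claim, uniformly in $t_0 > T$.

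The main obstacle I expect is the interplay between the two localizations and the two estimates: the dispersive estimate requires the forcing to live in a higher-regularity, spatially-weighted space, and producing a bound on that norm for $\chi_{\le R}u^5$ that is polynomial in $R, A, E$ — while simultaneously keeping the complementary "tail" small using only ILED and \eqref{assumption_to_be_improved} — forces a careful bookkeeping of derivatives (this is precisely the $\dot H^5 \times \dot H^4$ budget flagged in \cref{rem:derivative-bookkeeping}: five derivatives land on $u^5$ via the Leibniz rule, Sobolev embedding converts them to pointwise/Strichartz control, and the dispersive kernel consumes the rest). A secondary technical point is that $S(t,\tau)$ is a genuine two-parameter (non-stationary) propagator, so one must invoke the MetcalfeTataruParametrices Strichartz and the dispersive estimate in their time-dependent forms and verify the constants are uniform in $\tau$; this is where hypotheses \eqref{eq:hyp-a}–\eqref{eq:hyp-c} on $h$ and its derivatives are used. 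Once these norm bounds are in place, the convergence of the $\tau$-integral and the final choice of $T$ are routine.
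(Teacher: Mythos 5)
There is a genuine gap, and it sits at the heart of your reduction. You propose to bound the Duhamel integral by controlling the forcing $u^5(\tau)$ in an $L^1_\tau$-type norm over the remote window $[0,t_0-T]$ (explicitly for the ``tail'' piece, and implicitly for the ``core'' piece when you integrate the dispersive decay in $\tau$). But no such bound is available uniformly in $t_0$: the a priori bounds \eqref{assumption_to_be_improved} only give $\|u^5(\tau)\|_{L^2_x}\lesssim A^5$ pointwise in $\tau$, so integrating over an interval of length $\sim t_0$ loses a factor of $t_0$; ILED gives only $L^2_t$ control of spatially weighted quantities, so Cauchy--Schwarz in time still loses $t_0^{1/2}$; and the quantity $\|u^5\|_{L^1_\tau L^2_x}=\|u\|_{L^5_\tau L^{10}_x}^5$ is essentially the scattering norm whose finiteness is what the whole paper is trying to establish, so invoking it here is circular. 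A related problem afflicts the core piece: with only $|t-\tau|^{-1}$ decay, $\int_0^{t_0-T}(t-\tau)^{-1}\,d\tau\sim\log(t_0/T)$ at $t=t_0$, which is not uniform in $t_0$, and ``interpolating to get $\alpha>1$'' against a quantity that is itself not integrable in $\tau$ does not repair this.

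The paper avoids both issues by never integrating the nonlinearity over the remote past. Writing $\mathcal{N}_{\mathrm{far}}$ for the integral in question, it uses $\|\mathcal{N}_{\mathrm{far}}\|_{L^8_{t,x}}\le\|\mathcal{N}_{\mathrm{far}}\|_{L^\infty_{t,x}}^{1/2}\|\mathcal{N}_{\mathrm{far}}\|_{L^4_{t,x}}^{1/2}$. The $L^4_{t,x}$ factor is bounded \emph{uniformly in $t_0$ and $T$} by observing that $\mathcal{N}_{\mathrm{far}}$ solves the homogeneous equation $P\mathcal{N}_{\mathrm{far}}=0$ for $t\ge t_0-T$, with data at $t_0-T$ equal to $(u(t_0-T)-S(t_0-T)u[0],\,u_t(t_0-T)-\partial_tS(t_0-T)u[0])$; a single application of the $(4,4)$ Strichartz estimate plus interpolation of \eqref{assumption_to_be_improved} then gives $C(A)$ with no time integration of $u^5$ at all. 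The smallness comes entirely from the $L^\infty_{t,x}$ factor, which is $o_T(1)$ by the dispersive analysis of \cref{sec:a_dispersive_estimate}; there the convergence of the $\tau$-integral relies on the product decay $\langle t-s\rangle^{-1}\langle(t-s)-r\rangle^{-1}$ for the difference $S(t,s)-S_0(t-s)$ (see \cref{3.12}--\cref{3.13}), not on a bare $|t-\tau|^{-1}$ rate. Your ingredients (dispersion for the far past, Strichartz, the a priori bounds) are the right ones, but the assembly must route the ``boundedness'' half of the argument through the homogeneous-solution identification rather than through any $L^1_\tau$ norm of the forcing.
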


\begin{proof}
Our proof here depends on an $L^\infty$ bound whose proof we defer to \cref{sec:a_dispersive_estimate}. Here, we prove this proposition assuming that it holds.

Let $\mathcal{N}_{\mathrm{far}}$ equal the integral under investigation. We use the interpolation inequality
\begin{equation*}
    \| \mathcal{N}_{\mathrm{far}} \|_{L^8_{t,x}} \le \| \mathcal{N}_{\mathrm{far}} \|_{L^\infty_{t,x}}^{1/2} \cdot \| \mathcal{N}_{\mathrm{far}} \|_{L^4_{t,x}}^{1/2}.
\end{equation*}
We show the $L^\infty_{t,x}$ norm can be made arbitrarily small by choosing $T$ large (it is an $o_T(1)$ term), while the $L^4_{t,x}$ norm is bounded by a constant that depends on the a priori bounds on the solution, but is uniform in $t_0$ and $T$. The $L^\infty$ bound is proved in \cref{sec:a_dispersive_estimate}, and we prove the $L^4$ bound here.
For times $t \ge t_0 - T$, the function $\mathcal{N}_{\mathrm{far}}(t,x)$ is itself a solution to the homogeneous equation $P(\mathcal{N}_{\mathrm{far}}) = 0$. Its ``initial data'' at time $t_0 - T$ is given by
\begin{equation*}
    (\mathcal{N}_{\mathrm{far}}(t_0-T), \partial_t\mathcal{N}_{\mathrm{far}}(t_0-T)).
\end{equation*}
By evaluating the Duhamel formula for the full solution $u$, this data can be identified as $(u(t_0-T) - S(t_0-T)u[0], u_t(t_0-T) - \partial_t S(t_0-T)u[0])$.

The variable-coefficient Strichartz estimates for the pair $(p,q)=(4,4)$ give
\begin{equation*}
    \| \mathcal{N}_{\mathrm{far}} \|_{L^4_{t,x}([t_0-T, \infty))} \lesssim \| (\mathcal{N}_{\mathrm{far}}(t_0-T), \partial_t\mathcal{N}_{\mathrm{far}}(t_0-T)) \|_{\dot{H}^{1/2} \times \dot{H}^{-1/2}}.
\end{equation*}
By the triangle inequality, this Sobolev norm is bounded by
\begin{equation*}
    \|(u(t_0-T), u_t(t_0-T))\|_{\dot{H}^{1/2} \times \dot{H}^{-1/2}} + \|(u_0, u_1)\|_{\dot{H}^{1/2} \times \dot{H}^{-1/2}}.
\end{equation*}
The a priori bounds \eqref{assumption_to_be_improved} on $u$ and $u_t$ allow us to bound these $\dot{H}^{1/2} \times \dot{H}^{-1/2}$ norms by a constant $C(A)$ via interpolation. (In fact, a lower level of regularity than the higher degree norm in \cref{assumption_to_be_improved} is sufficient.) This bound is uniform in $t_0$ and $T$. So, $\| \mathcal{N}_{\mathrm{far}} \|_{L^4_{t,x}}$ is uniformly bounded by a constant depending only on $A$.
\end{proof}

\section{The recent past, and an interaction Morawetz estimate}\label{sec:interaction_Morawetz}

In this section, we adapt the interaction Morawetz estimate to the variable-coefficient operator $P$ and the equation \cref{eq:IVP} to establish a key global spacetime bound for the solution $u$. The interaction Morawetz estimate we will prove this section exists purely to prove \cref{prop:negligible_recent_nonlinear}. We just want to show that the solution must ``spread out'' after a sufficiently large time. By finite propagation speed, this means that the Strichartz norm is small for a long time, and by the time the solution can get back together, the dispersive estimate of \cref{sec:a_dispersive_estimate} takes over, proving scattering. This (interaction Morawetz) argument is similar to the interaction Morawetz argument in \cite{dodson-murphy2018}.

In the interaction Morawetz calculation, the main goal is to compute the time derivative of a certain functional $M_R(t)$. The key steps are to identify the main positive-definite terms that provide control, and to categorise the various error terms arising from the variable coefficients which are then bounded using integrated local energy decay (see \cref{thm:ILED0} for the simplest version of ILED, which is taken from \cite[Theorem 4.1]{Looi_Tohaneanu_2021}, and \cref{thm:high-order-ILED} for the higher-order ILED statements).

\begin{proposition}[Smallness of the recent-past contribution] \label{prop:negligible_recent_nonlinear}
Assume the hypotheses of \cref{thm:main} (in particular, the energy bound \cref{1.5} and the a priori bounds \cref{assumption_to_be_improved} hold), and let $T > 0$ be fixed.
For every $\eta > 0$, there exists a length threshold $L = L(\eta, E, A, T, g) > 0$ such that for any interval $I \subset \mathbb{R}$, if  $|I| > L$, then there exists a $t_0 \in I$ with
\begin{equation} \label{eq:local_nonlinear_small_eta}
\left\| \int_{t_{0} - T}^{t_{0}} S(t, \tau)(0, u^{5}(\tau)) d\tau \right\|_{L_{t,x}^{8}(\mathbb{R} \times \mathbb{R}^{3})} < \eta.
\end{equation}
\end{proposition}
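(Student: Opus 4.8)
\textbf{Proof strategy for \cref{prop:negligible_recent_nonlinear}.}

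The plan is to build an interaction Morawetz functional $M_R(t)$ adapted to $P$, differentiate it in time, and extract a nonnegative spacetime quantity controlling a suitable norm of $u$ over $I$; then use finite speed of propagation together with the Strichartz estimates of \cite{MetcalfeTataruParametrices} to convert that spacetime control into the desired $L^8_{t,x}$ smallness of the recent-past Duhamel term on a subinterval of length $T$. Concretely, I would take $M_R(t) = \int\int \chi(x-y) \, u_t(t,x)\, \partial_r u(t,y)\, u(t,x)^2 \, dx\, dy$ (or the standard $\dot H^{1/2}$-interaction variant with a weight $\psi_R$ localizing to $|x-y| \lesssim R$), mirroring the flat-space functional of \cite{dodson-murphy2018}, but with all derivatives and the d'Alembertian replaced by their $g$-counterparts. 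Differentiating and using the equation $Pu = u^5$, the leading terms reproduce the Minkowski interaction Morawetz identity, yielding a positive bulk term that bounds something like $\|u\|_{L^4_t L^4_x(I \times \mathbb{R}^3)}^4$ or a related Strichartz-type quantity, up to (i) boundary-in-time contributions $\sup_t |M_R(t)|$, which are controlled by the conserved energy $E$ and the a priori bounds \eqref{assumption_to_be_improved} uniformly in $R$ and $I$, and (ii) error terms generated by the perturbation $h$.

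The key mechanism is a pigeonhole/averaging argument. Since the bulk spacetime norm of $u$ over all of $I$ is bounded by a constant $K = K(E,A,g)$ independent of $|I|$ (because the boundary terms are), if $|I| > L$ with $L$ chosen large relative to $T$, $\eta$, $K$, and the implied constants, then $I$ contains at least $\lfloor |I|/T \rfloor$ disjoint subintervals of length $T$, and on at least one of them, call it $[t_0 - T, t_0]$, the local bulk norm is smaller than any prescribed threshold $\eta'$. On that subinterval the function $\mathcal{N}_{\mathrm{near}}(t) := \int_{t_0-T}^{t} S(t,\tau)(0,u^5(\tau))\,d\tau$ solves $P\mathcal{N}_{\mathrm{near}} = u^5$ with zero data at $t_0 - T$; by the inhomogeneous variable-coefficient Strichartz estimate its $L^8_{t,x}$ norm over $[t_0-T,\infty)$ (hence over $\mathbb{R}$, extending trivially) is controlled by $\||u|^4 u\|_{L^{8/7}_t L^{8/7}_x([t_0-T,t_0])}$ or an equivalent dual norm, which in turn is bounded by a product of the small local bulk norm and powers of $E$ and $A$ via Hölder and the a priori regularity bound. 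Choosing $\eta'$ small enough in terms of $\eta$, $E$, $A$ then gives \eqref{eq:local_nonlinear_small_eta}.

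The main obstacle is step (ii): controlling the error terms from $h$ in the Morawetz differentiation. These come in three flavors — terms with undifferentiated $h$ paired against a positive density (absorbable by smallness $\varepsilon$ in \eqref{eq:hyp-a}, \eqref{eq:hyp-b}), terms with $\partial h$ (which carry the short-range decay $\langle x\rangle^{-1-\gamma}$ from \eqref{eq:hyp-c} and must be paired against a local-energy-type density), and terms involving the double-null component $h^{\underline{L}\underline{L}}$ near the light cone (the reason for hypothesis \eqref{eq:hyp-b} and the vanishing $h^{0j}=0$, which kills the worst cross terms in the interaction structure). The strategy for all of these is to dominate them by $\varepsilon$ times the positive bulk term plus a weighted local energy integral, then absorb the former into the left side and bound the latter using the zeroth- and higher-order integrated local energy decay estimates (\cref{thm:ILED0}, \cref{thm:high-order-ILED}) applied to $u$ and its derivatives, with the high-regularity budget \eqref{assumption_to_be_improved} ensuring these ILED norms are finite and bounded by $C(E,A)$ uniformly in $I$. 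Verifying that the decay rates in \eqref{eq:hyp} are exactly strong enough to close this absorption — in particular matching the $\langle t-|x|\rangle$ growth allowances against the decay gained from the Morawetz weights — is the delicate bookkeeping at the heart of the proof.
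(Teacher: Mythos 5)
Your overall strategy matches the paper's: an interaction Morawetz functional adapted to $P$, metric error terms absorbed via the integrated local energy decay of \cref{thm:ILED0}, a pigeonhole to locate a good subinterval $[t_0-T,t_0]$, and an inhomogeneous Strichartz estimate to convert the local bulk smallness into \cref{eq:local_nonlinear_small_eta}. However, there are two concrete gaps. First, your claim that the positive bulk term controls ``something like $\|u\|_{L^4_{t,x}}^4$'' misidentifies the output: for the energy-critical wave equation the coercive density is bilinear, of the form $\sum_k\bigl(\int\chi_k|u|^6\,dx\bigr)\bigl(\int\chi_k(|\nabla u|^2+u_t^2)\,dy\bigr)$ with $\chi_k$ localized to cubes of side $R$, and this quantity is \emph{not} by itself a Strichartz norm of $u$. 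The paper must interpolate it against the a priori high-regularity bound $A$ of \cref{assumption_to_be_improved} to reach $\|u\|_{L_t^{27}L_x^{162/25}}$ before Strichartz and H\"older can close; you gesture at using $A$ at the end, but the specific functional you propose (the NLS-style mass-interaction $\iint \chi\, u_t\,\partial_r u\, u^2$) does not produce the coercive term used here — the paper instead pairs the full energy density $e(t,y)$ against the Morawetz current $\phi(\tfrac{x-y}{R})\bigl[(x-y)\cdot\langle u_t,\nabla u\rangle+\langle u_t,u\rangle\bigr]$.

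Second, and more seriously, your pigeonhole rests on the assertion that the total bulk over $I$ is bounded by a constant $K(E,A,g)$ independent of $|I|$. This is not justified at fixed $R$: differentiating $M_R$ produces, besides the sign-definite main term and the $O(E^2R)$ metric errors, non-sign-definite boundary terms involving $\phi'$ and $\phi''$ supported on the annulus $R\le|x-y|\le 2R$, which you do not address. In the paper (following \cite{Dodson2024}) these are handled only by averaging over the scale $R\in[R_0,e^JR_0]$ with measure $dR/R$; the averaging simultaneously absorbs the annulus terms into the main term and produces the smallness factor $1/J$, at the price of tying the interval length to $\mathcal{T}=e^JR_0$ (hence the exponential-type threshold $L$). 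Without this averaging step your bound on the bulk over $I$ — and therefore the pigeonhole that delivers $t_0$ — does not close.
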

We do not find the length threshold constant $L$ explicitly.

Suppose $h$ satisfies \cref{eq:hyp-a,eq:hyp-b,eq:hyp-c,eq:hyp-d}. 
The purpose of this interaction Morawetz estimate is to build a space-localized functional $M_R(t)$ whose time derivative contains a positive term dominating the density $|u|^{6}\langle t-|x|\rangle^{-1-\gamma}$.  
      Errors introduced by the coefficients are controlled using the decay assumptions \cref{eq:hyp} together with the local energy decay of \cite[Theorem 4.1]{Looi_Tohaneanu_2021}.

Choose $\phi \in C_{0}^{\infty}(\mathbb{R}^{3})$ that is supported on $|x| \leq 2$, $\phi(x) = 1$ on $|x| \leq 1$, $\phi(x) \geq 0$. Let $M_{R}(t)$ denote the Morawetz potential
\begin{equation}\label{2.2}
M_{R}(t) = \int e(t, y) \phi(\frac{x - y}{R}) (x - y) \cdot \langle u_{t}, \nabla u \rangle dx dy + \int e(t, y) \phi(\frac{x - y}{R}) \langle u_{t}, u \rangle ds dy,
\end{equation}
where $R > 0$ is a fixed constant and
\begin{equation}\label{2.3}
e(t, y) = -\frac{1}{2 g^{00}} g^{ij} (\partial_{i} u)(\partial_{j} u) + \frac{1}{2} u_{t}(t, y)^{2} + \frac{1}{6} u(t, y)^{6}.
\end{equation}
Then compute
\begin{equation}\label{2.4}
\frac{d}{dt} M_{R}(t) = \int e(t, y) \phi(\frac{x - y}{R}) (x - y) \cdot \langle \nabla u_{t}, u_{t} \rangle dx dy + \int e(t, y) \phi(\frac{x - y}{R}) \langle u_{t}, u_{t} \rangle dx dy
\end{equation}
\begin{equation}\label{2.5}
+ \int e(t, y) \phi(\frac{x - y}{R}) (x - y) \cdot \langle u_{tt}, \nabla u \rangle dx dy + \int e(t, y) \phi(\frac{x - y}{R}) \langle u_{tt}, u \rangle dx dy
\end{equation}
\begin{equation}\label{2.6}
+ \int e_{t}(t, y) \phi(\frac{x - y}{R}) (x - y) \cdot \langle \nabla u, u_{t} \rangle dx dy + \int e_{t}(t, y) \phi(\frac{x - y}{R}) \langle u_{t}, u \rangle dx dy.
\end{equation}

Integrating by parts,
\begin{equation}\label{2.7}
(\ref{2.4}) = -\frac{1}{2} \int e(t, y) \phi(\frac{x - y}{R}) |u_{t}|^{2} - \frac{1}{2} \int e(t, y) \phi'(\frac{x - y}{R}) \frac{|x - y|^{2}}{R} |u_{t}|^{2}.
\end{equation}
Now then, by \cref{eq:IVP},
\begin{equation}\label{2.8}
\aligned
u_{tt} = \frac{1}{g^{00}} g^{00} \partial_{0} \partial_{0} u = \frac{1}{g^{00}} \partial_{0}(g^{00} \partial_{0} u) - \frac{1}{g^{00}} \partial_{0}(g^{00}) \partial_{0} u \\
= \frac{1}{g^{00}} \partial_{\alpha}(g^{\alpha \beta} \partial_{\beta} u) - \frac{1}{g^{00}} \partial_{0}(g^{00}) \partial_{0} u - \frac{1}{g^{00}} \partial_{0}(g^{0j} \partial_{j} u) - \frac{1}{g^{00}} \partial_{i}(g^{i0} \partial_{0} u) - \frac{1}{g^{00}} \partial_{i} (g^{ij} \partial_{j} u) \\
= - \frac{1}{g^{00}} \partial_{0}(g^{00}) \partial_{0} u + \frac{1}{g^{00}} u^{5} - \frac{1}{g^{00}} \partial_{i} (g^{ij} \partial_{j} u) - \frac{1}{g^{00}} \partial_{0}(g^{0j} \partial_{j} u) - \frac{1}{g^{00}} \partial_{i}(g^{i0} \partial_{0} u).
\endaligned
\end{equation}
Plugging $(\ref{2.8})$ into $(\ref{2.5})$, observe that from \cref{thm:ILED0}, originating from Theorem 4.1 in \cite{Looi_Tohaneanu_2021}, if $u$ solves \cref{eq:IVP} and $h$ satisfies \cref{eq:hyp-a,eq:hyp-b,eq:hyp-c}, then
\begin{equation}\label{2.9}
\int_{\mathbb{R}} \int_{\mathbb{R}^{3}} \frac{|\nabla_{t,x} u|^{2}}{\langle x \rangle^{1 + \gamma}} + \frac{u^{2}}{\langle x \rangle^{3 + \gamma}} dx dt \lesssim E.
\end{equation}

Therefore, since $g^{00} = -1 + h^{00}$, \cref{thm:ILED0} and \cref{eq:hyp-c} imply
\begin{equation}\label{2.10}
\aligned
\int_{\mathbb{R}} \int e(t, y) \phi(\frac{x - y}{R}) (x - y) \cdot \langle -\frac{1}{g^{00}} \partial_{0}(g^{00}) \partial_{0} u, \nabla u\rangle dx dy dt \\ + \int_{\mathbb{R}} \int e(t, y) \phi(\frac{x - y}{R}) \langle -\frac{1}{g^{00}} \partial_{0}(g^{00}) \partial_{0} u, u \rangle dx dy dt \lesssim E^{2} R.
\endaligned
\end{equation}
Next, integrating by parts,
\begin{equation}\label{2.11}
\aligned
\int e(t, y) \phi(\frac{x - y}{R}) (x - y) \cdot \langle \frac{1}{g^{00}} u^{5}, \nabla u \rangle dx dy + \int e(t, y) \phi(\frac{x - y}{R}) \langle \frac{1}{g^{00}} u^{5}, u \rangle dx dy \\
= \frac{1}{2} \int e(t, y) \phi(\frac{x - y}{R}) \frac{1}{g^{00}(t, x)} u(t,x)^{6} dx dy - \frac{1}{6} \int e(t, y) \phi'(\frac{x - y}{R}) \frac{|x - y|}{R} \frac{1}{g^{00}(t,x)} u(t,x)^{6} dx dy \\
- \frac{1}{6} \int e(t, y) \phi(\frac{x - y}{R})(x - y) \cdot \nabla (\frac{1}{g(t,x)^{00}}) u(t, x)^{6} dx dy.
\endaligned
\end{equation}
Since $g^{00} = -1 + h^{00}$, then by \cref{thm:ILED0} and \cref{eq:hyp-c},
\begin{equation}\label{2.12}
\int_{\mathbb{R}} - \frac{1}{6} \int e(t, y) \phi(\frac{x - y}{R})(x - y) \cdot \nabla (\frac{1}{g(t,x)^{00}}) u(t, x)^{6} dx dy dt \lesssim E^{2} R.
\end{equation}
Next, integrating by parts,
\begin{equation}\label{2.13}
\aligned
-\int e(t, y) \phi(\frac{x - y}{R})(x - y) \cdot \langle \nabla u, \frac{1}{g^{00}} \partial_{i}(g^{ij} \partial_{j} u) \rangle dx dy - \int e(t, y) \phi(\frac{x - y}{R}) \langle u, \frac{1}{g^{00}} \partial_{i}(g^{ij} \partial_{j} u) \rangle dx dy \\
= \int e(t, y) \phi(\frac{x - y}{R})(x - y) \cdot \langle \nabla \partial_{i} u, \frac{1}{g^{00}} g^{ij} \partial_{j} u \rangle dx dy + \int e(t, y) \phi(\frac{x - y}{R}) \langle \partial_{i} u, \frac{1}{g^{00}} g^{ij} \partial_{j} u \rangle dx dy \\
+ \int e(t, y) \phi(\frac{x - y}{R}) \langle \partial_{i} u, \frac{1}{g^{00}} g^{ij} \partial_{j} u \rangle dx dy + \int e(t, y) \phi(\frac{x - y}{R}) \langle u, \partial_{i}(\frac{1}{g^{00}}) g^{ij} \partial_{j} u \rangle dx dy \\
+ \int e(t, y) \phi(\frac{x - y}{R})(x - y) \cdot \langle \nabla u, \partial_{i}(\frac{1}{g^{00}}) g^{ij} \partial_{j} u \rangle dx dy + \int e(t, y) \phi'(\frac{x - y}{R}) \frac{(x - y)_{i}}{|x - y| R} \langle g^{ij} \partial_{j} u, u \rangle dx dy \\
+ \int e(t, y) \phi'(\frac{x - y}{R}) \frac{(x - y)_{k} (x - y)_{i}}{|x - y| R} \langle \partial_{k} u, g^{ij} \partial_{j} u \rangle dx dy.
\endaligned
\end{equation}
Again, by \cref{thm:ILED0} and \cref{eq:hyp-a},
\begin{equation}\label{2.14}
\aligned
\int_{\mathbb{R}} \int e(t, y) \phi(\frac{x - y}{R}) \langle u, \partial_{i}(\frac{1}{g^{00}}) g^{ij} \partial_{j} u \rangle dx dy dt \\ + \int_{\mathbb{R}}  \int e(t, y) \phi(\frac{x - y}{R})(x - y) \cdot \langle \nabla u, \partial_{i}(\frac{1}{g^{00}}) g^{ij} \partial_{j} u \rangle dx dy dt \lesssim E^{2} R.
\endaligned
\end{equation}
Integrating by parts,
\begin{equation}\label{2.15}
\aligned
\int e(t, y) \phi(\frac{x - y}{R})(x - y) \cdot \langle \nabla \partial_{i} u, \frac{1}{g^{00}} g^{ij} \partial_{j} u \rangle dx dy + 2 \int e(t, y) \phi(\frac{x - y}{R}) \langle \partial_{i} u, \frac{1}{g^{00}} g^{ij} \partial_{j} u \rangle dx dy \\
= \frac{1}{2}  \int e(t, y) \phi(\frac{x - y}{R}) \langle \partial_{i} u, \frac{1}{g^{00}} g^{ij} \partial_{j} u \rangle dx dy - \frac{1}{2} \int e(t, y) \phi'(\frac{x - y}{R}) \frac{|x - y|}{R} \langle \partial_{i} u, \frac{1}{g^{00}} g^{ij} \partial_{j} u \rangle dx dy \\
- \frac{1}{2} \int e(t, y) \phi(\frac{x - y}{R}) (x - y) \cdot \nabla (\frac{g^{ij}}{g^{00}}) (\partial_{i} u)(\partial_{j} u) dx dy.
\endaligned
\end{equation}
Again by \cref{thm:ILED0} and \cref{eq:hyp-c},
\begin{equation}\label{2.16}
\int_{\mathbb{R}} \frac{1}{2} \int e(t, y) \phi(\frac{x - y}{R}) (x - y) \cdot \nabla (\frac{g^{ij}}{g^{00}}) (\partial_{i} u)(\partial_{j} u) dx dy dt \lesssim E^{2} R.
\end{equation}
Also, since $g^{ij} = 1 + h^{ij}$, and $g^{00} = -1 + h^{00}$, by the decay hypothesis $|h(t,x)| \lesssim \langle x \rangle^{-1-\gamma'}$ for some $\gamma' > 0$ (which is guaranteed by our stronger assumption \cref{eq:hyp-d}), and \cref{thm:ILED0}, for any $T \in \mathbb{R}$,
\begin{equation}\label{2.16.1}
\int_{0}^{T} \frac{1}{2}  \int e(t, y) \phi(\frac{x - y}{R}) \langle \partial_{i} u, \frac{1}{g^{00}} g^{ij} \partial_{j} u \rangle dx dy dt = -\int_{0}^{T} \frac{1}{2} e(t, y) \phi(\frac{x - y}{R}) |\nabla u|^{2} dx dy dt + O(E^{2} R).
\end{equation}
Again, by \cref{thm:ILED0} and the decay hypothesis $|h(t,x)| \lesssim \langle x \rangle^{-1-\gamma'}$ for some $\gamma' > 0$,
\begin{equation}\label{2.16.2}
\aligned
-\int_{0}^{T} \frac{1}{2} \int e(t, y) \phi(\frac{x - y}{R}) [|u_{t}|^{2} + |\nabla u|^{2}] dx dy dt \\ = -\int_{0}^{T} [\frac{1}{2} |u_{t}|^{2} + \frac{1}{2} |\nabla u|^{2} + \frac{1}{6} u^{6}] \phi(\frac{x - y}{R}) [\frac{1}{2} |u_{t}|^{2} + \frac{1}{2} |\nabla u|^{2}] dx dy dt + O(E^{2} R).
\endaligned
\end{equation}

Similarly,
\begin{equation}\label{2.16.3}
\begin{split}
\int \frac{1}{2} \int e(t, y) \phi\left(\frac{x - y}{R}\right) \frac{1}{g^{00}(t, x)} u(t,x)^{6} dx dy dt = \\
-\int \left[\frac{1}{2} |u_{t}|^{2} + \frac{1}{2} |\nabla u|^{2} + \frac{1}{6} u^{6}\right] \phi\left(\frac{x - y}{R}\right) \frac{1}{2} u(t,x)^{6} dx dy + O(E^{2} R).
\end{split}
\end{equation}
Since $g^{0j} = 0$, we can ignore the last two terms in $(\ref{2.8})$.\medskip

Now turn to $(\ref{2.6})$. By direct computation using the product rule,
\begin{equation}\label{2.17}
\partial_{t} e(t, y) = -\frac{1}{g^{00}} g^{jk} (\partial_{j} u)(\partial_{k} u_{t}) + u_{t} u_{tt} - \frac{1}{g^{00}} u_{t} u^{5} -\frac{1}{2} \partial_{t}(\frac{g^{jk}}{g^{00}}) (\partial_{j} u)(\partial_{k} u).
\end{equation}
As usual, by \cref{thm:ILED0} and \cref{eq:hyp-c},
\begin{equation}\label{2.18}
-\int_{\mathbb{R}} \int \partial_{t}(\frac{g^{jk}}{g^{00}}) (\partial_{j} u)(\partial_{k} u)(t, y) \phi(\frac{x - y}{R}) [(x - y) \cdot \langle u_{t}, \nabla u \rangle + \langle u_{t}, u \rangle] dx dy dt \lesssim E^{2} R.
\end{equation}
Using $(\ref{2.8})$ to compute $u_{tt}$,
\begin{equation}\label{2.19}
-\int_{\mathbb{R}} \int \frac{1}{g^{00}} u_{t} \partial_{0}(g^{00}) \partial_{0} u(t, y) \phi(\frac{x - y}{R}) [(x - y) \cdot \langle u_{t}, \nabla u \rangle + \langle u_{t}, u \rangle] dx dy dt \lesssim E^{2} R.
\end{equation}
Next, using the product rule,
\begin{equation}\label{2.20}
-\frac{1}{g^{00}} \partial_{i}(g^{ij} \partial_{j} u) u_{t} - \frac{1}{g^{00}} g^{ij}(\partial_{i} u)(\partial_{j} u_{t}) = -\frac{1}{g^{00}} \partial_{i}(g^{ij} \partial_{j} u u_{t}) = -\partial_{i}(\frac{1}{g^{00}} g^{ij} (\partial_{j} u) u_{t}) + g^{ij} (\partial_{j} u) u_{t} \partial_{i}(\frac{1}{g^{00}}).
\end{equation}
Then by \cref{thm:ILED0} and \cref{eq:hyp-c},
\begin{equation}\label{2.21}
\int_{\mathbb{R}} \int \partial_{i}(\frac{1}{g^{00}}) g^{ij}(\partial_{j} u) u_{t}(t, y) \phi(\frac{x - y}{R}) [(x - y) \cdot \langle u_{t}, \nabla u \rangle + \langle u_{t}, u \rangle] dx dy dt \lesssim E^{2} R.
\end{equation}
Next, integrating by parts,
\begin{equation}\label{2.22}
\aligned
-\int \partial_{i}(\frac{1}{g^{00}} g^{ij} (\partial_{j} u) u_{t}) \phi(\frac{x - y}{R}) [(x - y) \cdot \langle u_{t}, \nabla u \rangle + \langle u_{t}, u \rangle] dx dy \\ = -\int \frac{1}{g^{00}} g^{ij} (\partial_{j} u) u_{t} \phi(\frac{x - y}{R}) \langle u_{t}, \partial_{i} u \rangle dx dy \\
-\int \frac{1}{g^{00}} g^{ij} (\partial_{j} u) u_{t} \phi'(\frac{x - y}{R}) \frac{(x - y)_{i} (x - y)_{k}}{|x - y| R} \langle u_{t}, \partial_{k} u \rangle dx dy \\
-\int \frac{1}{g^{00}} g^{ij} (\partial_{j} u) u_{t} \phi'(\frac{x - y}{R}) \frac{(x - y)_{i}}{|x - y| R} \langle u_{t}, u \rangle dx dy
\endaligned
\end{equation}
Also,
\begin{equation}\label{2.23}
\int -\int \frac{1}{g^{00}} g^{ij} (\partial_{j} u) u_{t} \phi(\frac{x - y}{R}) \langle u_{t}, \partial_{i} u \rangle dx dy dt = \int \int \int (\partial_{i} u) u_{t} \phi(\frac{x - y}{R}) (\partial_{i} u) u_{t} dx dy dt + O(E^{2} R).
\end{equation}
Again, since $g^{0j} = 0$ for $j = 1, 2, 3$, we can perform the same computations as in section four of \cite{Dodson2024}. For exposition, we discuss these computations from that section 4 here, but we clarify that these rigorous arguments are available in \cite[Section 4]{Dodson2024}.

The preceding calculations have established that $M_R'(t)$ can be decomposed into a principal part, which is identical to the flat-space case, and error terms arising from the metric perturbation. Combining the estimates \eqref{2.7}, \eqref{2.11}, \eqref{2.15}, and \eqref{2.22}, and incorporating the bounds on the metric-dependent error terms from \eqref{2.10}, \eqref{2.12}, \eqref{2.14}, \eqref{2.16}, \eqref{2.18}, \eqref{2.19}, and \eqref{2.21}, we arrive at an expression of the form:
\begin{equation}
\frac{d}{dt} M_R(t) = \mathcal{I}_{\text{main}}(t) + \mathcal{I}_{\text{boundary}}(t) + \mathcal{E}(t),
\end{equation}
where $\mathcal{E}$ arises from $g$ not being equal to $m$. We have $|\int_{T_1}^{T_2} \mathcal{E}(t) dt| \lesssim E^2 R$ uniformly in time, and the main and boundary terms are those derived in \cite[Section 4]{Dodson2024}. Specifically, $\mathcal{I}_{\text{main}}(t)$ contains a positive-definite spacetime density. The term $\mathcal{I}_{\text{boundary}}(t)$ consists of integrals containing derivatives of the cutoff function (called $\phi'$ and $\phi''$ in \cite{Dodson2024}) which are supported on the annulus $R \le |x-y| \le 2R$ defined in \cite{Dodson2024}.

Following the argument in \cite{Dodson2024}, we now temporarily use notation from that paper. We average over the scale parameter $R$. Let $J \gg 1$ and $R_0 > 0$. We integrate in time from $a_j$ to $a_j + \mathcal{T}$ and average in $R$ over $[R_0, e^J R_0]$:
\begin{equation}
\frac{1}{J} \int_{a_j}^{a_j + \mathcal{T}} \int_{R_0}^{e^J R_0} \frac{1}{R} \frac{d}{dt} M_R(t) \,dR \,dt = \frac{1}{J} \int_{R_0}^{e^J R_0} \frac{1}{R} [M_R(a_j + \mathcal{T}) - M_R(a_j)] \,dR.
\end{equation}
The Morawetz potential is uniformly bounded by $|M_R(t)| \lesssim E^2 R$, so the right-hand side is bounded by $O(\frac{e^J R_0}{J} E^2)$. The averaging procedure converts the boundary terms $\mathcal{I}_{\text{boundary}}$ into spacetime integrals that can be absorbed into the main term for large $J$, while the error terms contribute at most $O(\frac{\mathcal{T}}{J} E^2)$. This procedure yields %
\begin{align}
\frac{1}{J} \int_{a_j}^{a_j + \mathcal{T}}   \int_{R_0}^{e^J R_0}   \frac{1}{R} \iint &   e(t,y) \phi\left(\frac{x-y}{R}\right) \left[ \frac{1}{2}(|u_t| - |\nabla u|)^2 + \frac{1}{6}u^6 \right] dx dy dR dt \nonumber \\
& \lesssim \frac{\mathcal{T}}{J} E^2 + \frac{e^J R_0}{J} E^2. \label{eq:morawetz_inequality}
\end{align}
By setting $\mathcal{T} = e^J R_0$, we ensure the right-hand side is $O(\frac{\mathcal{T}}{J} E^2)$. This inequality implies that for any long time interval $[a_j, a_j + \mathcal{T}]$, the spacetime density on the left must be small on average. By an averaging argument, there must exist a subinterval $[t_j - T, t_j] \subset [a_j, a_j + \mathcal{T}]$ where we gain control over a spacetime norm of $u$.

More precisely, as in \cite{Dodson2024}, the inequality \eqref{eq:morawetz_inequality} implies that for some $t_j \in [a_j+T, a_j+\mathcal{T}+T]$, we have
\begin{equation}
\int_{t_j-T}^{t_j} \sum_{k \in \mathbb{Z}^3} \left( \int \chi_k |u|^6 dx \right) \left( \int \chi_k (|\nabla u|^2 + u_t^2) dy \right) dt \lesssim \frac{T}{J} E^2,
\end{equation}
where $\chi_k$ are smooth partitions of unity localized to cubes of side length $R$. Interpolating this local $L^6_x$ bound with the a priori $L^\infty_t \dot{H}^2_x \le A$ bound allows us to deduce a smallness bound on an intermediate spacetime norm. Following the interpolation scheme in \cite[(4.43)-(4.49)]{Dodson2024}, we find
\begin{equation}\label{3.31}
\| u \|_{L_{t}^{27} L_{x}^{162/25}([t_{j} - T, t_{j}] \times \mathbb{R}^{3})} \lesssim \left(\frac{T}{J}\right)^{1/27} E^{1/9} A^{1/3}.
\end{equation}
Now then, by Strichartz estimates, as in Eq. (4.51) and the line after it in \cite{Dodson2024},
\begin{equation}
\| u \|_{L_{t}^{9/4} L_{x}^{54}([t_{j} - T, t_{j}] \times \mathbb{R}^{3})} \lesssim E^{1/2} + \| u \|_{L_{t}^{9/4} L_{x}^{54}}^{2} \| u \|_{L_{t}^{27} L_{x}^{162/25}}^{3}.
\end{equation}
Therefore,
\begin{equation}
\| u^{5} \|_{L_{t}^{1} L_{x}^{2}([t_{j} - T, t_{j}] \times \mathbb{R}^{3})} \lesssim \| u \|_{L_{t}^{9/4} L_{x}^{54}}^{2} \| u \|_{L_{t}^{27} L_{x}^{162/25}}^{3} \lesssim E \left(\frac{T}{J}\right)^{1/9} E^{1/3} A = T A^3 E^{1/3} J^{-1/9}
\end{equation} 
Then, by Strichartz estimates, since $\| u \|_{L_{t}^{9/4} L_{x}^{54}} \lesssim E^{1/2}$---which follows by 
\begin{equation}
\| \int_{t_{j} - T}^{t_{j}} S(t, \tau) (0, u^{5}) d\tau \|_{L_{t,x}^{8}} \lesssim  \| u^{5} \|_{L_{t}^{1} L_{x}^{2}} \lesssim \| u \|_{L_{t}^{9/4} L_{x}^{54}}^{2} \| u \|_{L_{t}^{27} L_{x}^{162/25}}^{3} \lesssim E \| u \|_{L_{t}^{27} L_{x}^{162/25}}^{3} \lesssim E \left(\frac{T}{J}\right)^{1/9} E^{1/3} A. 
\end{equation}
(There is no ``energy term'' in the Strichartz estimate since we are estimating the Duhamel term.) 
To ensure the smallness of the ``recent past'' nonlinear term, we require this norm to be $\le \delta$ where $\delta \sim (\epsilon/E)^{1/3}$, where $\epsilon$ is small. By choosing $J$ sufficiently large, %
we can achieve 
\[\| \int_{t_{j} - T}^{t_{j}} S(t, \tau) (0, u^{5}) d\tau \|_{L_{t,x}^{8}} \le \delta.\]
The choice of large $J$ was made in the choice of large $\mathcal T=e^J R_0$. 

Thus there exists $t_j$ for which the hypothesis of Proposition \ref{prop:negligible_recent_nonlinear} holds.%

We have just proved \cref{prop:negligible_recent_nonlinear}, which we restate here for the reader's convenience.
\begin{proposition}[Existence of a ``quiet'' time]\label{prop:quiet_time}
Under the hypotheses of \cref{thm:main}, for any $T > 0$, there exists a time $\mathcal{T} = \mathcal{T}(E, A, T,g)$ such that for any interval $I$ with $|I| \ge \mathcal{T}$, there is a time $t_0 \in I$ for which
\begin{equation}
\left\| \int_{t_{0} - T}^{t_{0}} S(t,\tau)(0, u^{5}(\tau)) d\tau \right\|_{L_{t,x}^{8}(\mathbb{R} \times \mathbb{R}^{3})} \ll 1.
\end{equation}
Thus if we have finitely many intervals $I_j$ (by the Strichartz estimate, as proved in \cref{prop:finite_decomp}), then we have finitely many of these quiet times $t_j$, with each $t_j$ associated to $I_j$.
\end{proposition}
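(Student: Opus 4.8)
The plan is to read \cref{prop:quiet_time} directly off \cref{prop:negligible_recent_nonlinear}: all of the analytic work---the variable-coefficient interaction Morawetz identity for $M_R(t)$, the $O(E^2 R)$ control of the curvature error terms via \eqref{eq:hyp} and the integrated local energy decay \cref{thm:ILED0}, the averaging in the scale $R$, and the interpolation/Strichartz bootstrap culminating in \eqref{3.31}---has already been carried out in this section, so nothing new is needed here. Fix $T>0$. First I would pin down what ``$\ll 1$'' has to mean: it is precisely the threshold that makes the perturbation/bootstrap argument of \cref{eq:L_eight} close, namely the Duhamel term over $[t_0-T,t_0]$ should have $L^8_{t,x}$ norm at most $\delta=\delta(E)$ (with $\delta\sim(\epsilon/E)^{1/3}$ and $\epsilon$ absolute, as in the computation just above). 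I would then apply \cref{prop:negligible_recent_nonlinear} with $\eta:=\delta(E)$, obtaining a length threshold $L=L(\delta(E),E,A,T,g)$, and set $\mathcal{T}:=L$ (enlarged by an arbitrarily small amount if the strict inequality $|I|>L$ is wanted).

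Next I would check the quiet-time assertion itself. For any interval $I\subset\mathbb{R}$ with $|I|\ge\mathcal{T}$, \cref{prop:negligible_recent_nonlinear} produces a time $t_0\in I$ with
\[
\Big\| \int_{t_0-T}^{t_0} S(t,\tau)(0,u^5(\tau))\,d\tau \Big\|_{L^8_{t,x}(\mathbb{R}\times\mathbb{R}^3)} < \delta(E) \ll 1,
\]
which is exactly the claimed inequality. The structural point worth recording is that $\mathcal{T}$ depends only on $E$, $A$, $T$ and the metric $g$, and not on the particular interval $I$; hence the same threshold $\mathcal{T}$ governs every sufficiently long interval at once.

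For the finiteness of the family of quiet times I would invoke \cref{prop:finite_decomp}. Running it at the partition scale $\eta_0$ (the scale chosen in the proof of \cref{thm:main} so that $\|S(t)(u_0,u_1)\|_{L^8_{t,x}(I_j)}\le\eta_0$, a parameter logically unrelated to $\delta(E)$) gives a finite decomposition $\mathbb{R}=\bigcup_{j=1}^{M} I_j$ with $M=\lceil C(E)^8\eta_0^{-8}\rceil$. Each $I_j$ with $|I_j|\ge\mathcal{T}$ then receives a quiet time $t_j\in I_j$ by the previous paragraph, while each $I_j$ with $|I_j|<\mathcal{T}$ receives none but is instead controlled directly by $\|u\|_{L^8_{t,x}(I_j)}\lesssim|I_j|^{1/8}A\le\mathcal{T}^{1/8}A$ (H\"older and Sobolev, as in the outline of \cref{thm:main}). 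Either way the collection $\{t_j\}$ has cardinality at most $M<\infty$, with each $t_j$ attached to its interval $I_j$, which is the final assertion of \cref{prop:quiet_time}.

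I do not anticipate a substantive obstacle at this step, precisely because the difficulty has been absorbed into \cref{prop:negligible_recent_nonlinear} (and, for the companion ``remote past'' piece, into the dispersive estimate of \cref{sec:a_dispersive_estimate}). The only genuine care needed is bookkeeping: keeping the two smallness parameters distinct---the partition granularity $\eta_0$ of \cref{prop:finite_decomp} versus the quiet-time scale $\delta(E)$ forced by the bootstrap in \cref{eq:L_eight}---and confirming that $\mathcal{T}=\mathcal{T}(E,A,T,g)$ truly carries no dependence on the individual interval, so a single threshold applies uniformly across the partition. As already noted, $\mathcal{T}$ (equivalently $L$) comes from a qualitative ``choose $J$ large'' argument and is not made explicit.
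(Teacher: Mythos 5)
Your proposal is correct and matches the paper's treatment: the paper presents \cref{prop:quiet_time} explicitly as a restatement of \cref{prop:negligible_recent_nonlinear}, whose proof is the interaction Morawetz argument carried out earlier in the section, so deducing it by instantiating that proposition with the bootstrap threshold $\eta=\delta(E)$ and setting $\mathcal{T}=L$ is exactly the intended logic. Your added bookkeeping (keeping the partition scale $\eta_0$ distinct from $\delta(E)$, and noting that short intervals are handled directly by the H\"older--Sobolev bound) is consistent with the outline of \cref{thm:main}.
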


\section{A dispersive estimate} \label{sec:a_dispersive_estimate}
We now turn to completing the proof of \cref{prop:negligible_remote_past}. To do so, we analyze the solution to the linear wave equation,
\begin{equation}\label{3.1}
Pu = 0, \qquad P = \partial_{\alpha} (g^{\alpha \beta} \partial_{\beta}) , \qquad u[0] = (u_{0}, u_{1}) \in \dot{H}^{1} \times L^{2}.
\end{equation}
Let $S(t, s)$ be the solution operator to $(\ref{3.1})$. That is,
\begin{equation}\label{3.2}
S(t, s)(f, g) = \phi(t, x),
\end{equation}
where $\phi(t, x)$ is the solution to
\begin{equation}\label{3.3}
P \phi = 0, \qquad P = \partial_{\alpha} (g^{\alpha \beta} \partial_{\beta}) \phi, \qquad \phi[s] = (f, g).
\end{equation}
We show that if
\begin{equation}\label{3.4}
	\aligned
\| \partial^{2} S(t, s)(f, g) \|_{L^{\infty}} \lesssim \frac{1}{\langle t - s\rangle} (\| (f, g) \|_{H^{\sigma} \times H^{\sigma - 1}} + \| (f, g) \|_{W^{4, 1} \times W^{3, 1}}), \\
\endaligned
\end{equation}
for some $\sigma$ sufficiently large, then for any $t_{0} \in \mathbb{R}$, $t > t_{0}$,
\begin{equation}\label{3.5}
\| \int_{0}^{t_{0} - T} S(t, s) (0, u^{5}) ds \|_{L^{\infty}} \lesssim_{A} o_{T}(1),
\end{equation}
where $A$ represents the a priori bounds on $u$,
\begin{equation}\label{3.6}
	\| (u, u_{t}) \|_{L_{t}^{\infty} H^{\sigma} \cap L^{2} \times H^{\sigma - 1} \cap H^{-1}} \leq A.
\end{equation}
The space $W^{k, 1}$ is the Sobolev space where up to $k$ derivatives of the function lie in $L^{1}$.

If $\phi$ solves $(\ref{3.3})$ then $\phi = U + v$, where $U$ solves
\begin{equation}\label{3.7}
\Box U = 0, \qquad U(s) = f, \qquad U_{t}(s) = g,
\end{equation}
and $v$ solves
\begin{equation}\label{3.8}
\Box v = \partial_{\alpha} (g^{\alpha \beta} \partial_{\beta}) \phi, \qquad v(s) = 0, \qquad v_{t}(s) = 0.
\end{equation}
By Duhamel's principle,
\begin{equation}\label{3.9}
\phi(t, x) = U(t, x) + \int_{s}^{t} S_{0}(t - \tau)(0, \Box v) d\tau.
\end{equation} where $S_{0}(t - \tau)$ is the solution operator to the Minkowski wave equation.

\paragraph{Proof of \cref{3.4}.}
Let $S(t,s)$ be the solution operator to $P=\partial_\alpha(g^{\alpha\beta}\partial_\beta)$, and let
\[
\phi(t)=S(t,s)(f,g),\qquad \tau:=t-s .
\]
Decompose $\phi=U+v$ by
\[
\square U=0,\ U[s]=f,\ U_t[s]=g,\qquad \square v=Q\phi:=\partial_\alpha(h^{\alpha\beta}\partial_\beta\phi),\ v[s]=v_t[s]=0 .
\]
The $U$-part obeys the desired bound by the $3\text{D}$ Kirchhoff/Poisson formula and differentiation of the free kernel:
\[
\|\partial^2 U(t)\|_{L^\infty_x}\ \lesssim\ \langle\tau\rangle^{-1}\,\|(f,g)\|_{W^{4, 1} \times W^{3, 1}} ,
\]
so it remains to control $v$. 

For the purpose of controlling $v$, the Klainerman--Sobolev inequality is not immediately applicable because the commutators $[P,L_i]$ with the Lorentz boosts produce coefficients of the form $t \partial_i h$, which become uncontrollable as $t \to \infty$ under our spatial decay assumptions on $h$ (in particular, $\partial_i h$ is \textit{not} assumed to temporally decay as $t \to \infty$). 

\begin{lemma}\label{lem:radial-Sobolev-annulus}
Let $f:\mathbb{R}^3\to\mathbb{R}$. Then for every $R>0$,
\[
\|f\|_{L^\infty(\{R<|x|<R+1\})}
\lesssim
R^{-1}\sum_{j=0}^1\sum_{|\alpha|\le2} \|\partial_r^j\Omega^\alpha f\|_{L^2(\{R-1<|x|<R+2\})}.
\]
\end{lemma}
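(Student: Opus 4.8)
The plan is to prove this as a standard fixed-radius annular Sobolev embedding, the key point being that all implicit constants are uniform in $R$ because we only ever integrate over a unit-width shell. First I would localize: pick a smooth cutoff $\chi$ supported in $\{R-1<|x|<R+2\}$ with $\chi\equiv 1$ on $\{R<|x|<R+1\}$, and chosen so that angular derivatives of $\chi$ are $O(1)$ and radial derivatives are $O(1)$ as well (this is possible precisely because the shell has width $O(1)$; a naive rescaling of a fixed bump would cost powers of $R$ only in the radial direction, which is fine since we take at most one $\partial_r$). It then suffices to bound $\|\chi f\|_{L^\infty}$ in terms of the displayed right-hand side.

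Next I would pass to polar coordinates $x=r\omega$, $r\in(R-1,R+2)$, $\omega\in\mathbb{S}^2$, and apply a one-dimensional Sobolev inequality in $r$ together with the standard Sobolev embedding $H^2(\mathbb{S}^2)\hookrightarrow L^\infty(\mathbb{S}^2)$. Concretely, for fixed $\omega$ the function $r\mapsto (\chi f)(r\omega)$ is supported in an interval of unit length, so $\|(\chi f)(\cdot\,\omega)\|_{L^\infty_r}\lesssim \sum_{j=0}^1\|\partial_r^j(\chi f)(\cdot\,\omega)\|_{L^2_r}$ with an absolute constant. Taking $L^\infty_\omega$ and then $H^2_\omega$, and using that $\Omega^\alpha$ for $|\alpha|\le 2$ spans the angular derivatives needed for $H^2(\mathbb{S}^2)\hookrightarrow L^\infty(\mathbb{S}^2)$, gives
\[
\|\chi f\|_{L^\infty}\lesssim \sum_{j=0}^1\sum_{|\alpha|\le 2}\Bigl(\int_{R-1}^{R+2}\!\!\int_{\mathbb{S}^2}|\partial_r^j\Omega^\alpha(\chi f)(r\omega)|^2\,d\omega\,dr\Bigr)^{1/2}.
\]
Since $\Omega^\alpha$ and $\partial_r$ commute with multiplication by the radial cutoff $\chi$ up to harmless lower-order terms with $O(1)$ coefficients supported in the same shell, the right side is bounded by $\sum_{j,\alpha}\|\partial_r^j\Omega^\alpha f\|_{L^2_\omega L^2_r(\{R-1<r<R+2\})}$.

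The last step is to convert the $d\omega\,dr$ measure back to Lebesgue measure $dx=r^2\,d\omega\,dr$. On the shell $\{R-1<|x|<R+2\}$ with $R\ge 1$ we have $r^2\sim R^2$, so $d\omega\,dr\sim R^{-2}\,dx$, whence each $L^2_\omega L^2_r$ norm is $\lesssim R^{-1}\|\cdot\|_{L^2(dx)}$; for $R<1$ the shell is contained in a fixed ball and the inequality is trivial after adjusting the constant. This produces exactly the claimed factor $R^{-1}$ and completes the proof. I do not anticipate a serious obstacle here — the only thing to be careful about is the asymmetry between the radial and angular directions in the cutoff (radial rescaling would cost $R$'s, but since we allow at most $j\le 1$ radial derivative and a unit-width shell, no such cost appears), and making sure the commutators $[\partial_r^j\Omega^\alpha,\chi]$ are absorbed into the same right-hand side, which is routine.
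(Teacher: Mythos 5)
Your proof is correct and follows essentially the same route as the paper's: both combine a one-dimensional Sobolev embedding in $r$ on the unit-width shell with the spherical embedding $H^2(\mathbb S^2)\hookrightarrow L^\infty(\mathbb S^2)$, and both extract the factor $R^{-1}$ from the Jacobian $r^2\sim R^2$ relating $d\omega\,dr$ to $dx$, treating $R\lesssim 1$ separately by the local embedding $H^2\hookrightarrow L^\infty$ on a fixed ball. The only differences are presentational (you localize with a radial cutoff, whereas the paper restricts the one-dimensional embedding directly to $[R-1,R+2]$), so no further changes are needed.
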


\begin{proof}
Set $A_R=\{R-1<|x|<R+2\}$ and $I_R=[R,R+1]$. If $R\le 1$, then $A_R\subset B_3$ and the claim follows from the local embedding $H^2(B_3)\hookrightarrow L^\infty(B_3)$, since $R^{-1}\ge 1$. Hence assume $R\ge 1$.

Fix $r\in I_R$. By Sobolev on the unit sphere and scaling to $S_r$,
\begin{equation}\label{eq:sph-sob}
\|f\|_{L^\infty(S_r)}
\lesssim r^{-1}\sum_{|\alpha|\le 2}\|\Omega^\alpha f\|_{L^2(S_r)} .
\end{equation}
Write $F_\alpha(r):=\|\Omega^\alpha f(r\cdot)\|_{L^2(\mathbb S^2)}$, so $\|\Omega^\alpha f\|_{L^2(S_r)}=r\,F_\alpha(r)$. By the one-dimensional embedding $H^1([R-1,R+2])\hookrightarrow C^0([R-1,R+2])$,
\[
\sup_{r\in I_R}F_\alpha(r)
\lesssim \|F_\alpha\|_{L^2([R-1,R+2])}
+\|F_\alpha'\|_{L^2([R-1,R+2])},
\]
and $F_\alpha'(r)\le \|\partial_r(\Omega^\alpha f)(r\cdot)\|_{L^2(\mathbb S^2)}$.
Using the coarea formula and $r\sim R$ on $A_R$,
\[
\|F_\alpha\|_{L^2([R-1,R+2])}
+\|F_\alpha'\|_{L^2([R-1,R+2])}
\lesssim R^{-1}\Big(\|\Omega^\alpha f\|_{L^2(A_R)}
+\|\partial_r\Omega^\alpha f\|_{L^2(A_R)}\Big).
\]
Therefore,
\[
\sup_{r\in I_R}\|\Omega^\alpha f\|_{L^2(S_r)}
=\sup_{r\in I_R} r\,F_\alpha(r)
\lesssim \|\Omega^\alpha f\|_{L^2(A_R)}
+\|\partial_r\Omega^\alpha f\|_{L^2(A_R)} .
\]
\end{proof}

By \cref{lem:radial-Sobolev-annulus} and uniform energy bounds for the relevant vector-fields of $\partial^2S(t,s)(f,g)$---see \cref{sec:high-order-LE} for a proof---we have
\[
\|\partial^2  S(t, s)(f, g) \|_{L^\infty(R<|x|<R+1)}
 \lesssim_A 
R^{-1}.
\]
This implies that
\[
|\partial^2  S(t, s)(f, g) (x)|
 \le  C_A
\langle x \rangle^{-1}.
\]
By the $r$ to $t$ transfer from \cref{subsec:r_to_t}, we obtain \cref{3.4}, which contains norms of the data in the right-hand side. 

\emph{Pointer.} The high-order energy estimates of \cref{sec:high-order-LE} with $N=4$ provide the required vector-field control used here to obtain \cref{3.4}; see Remark~\ref{rem:derivative-bookkeeping}.

Similarly, again by \cref{lem:radial-Sobolev-annulus} and uniform energy bounds for the relevant vector-fields of $\partial S(t,s)(f,g)$, we have
\[
\|\partial  S(t, s)(f, g) \|_{L^\infty(R<|x|<R+1)}
 \lesssim_A 
R^{-1}
\] and hence also \[
|\partial  S(t, s)(f, g) (x)|
 \le  C_A
\langle t-s \rangle^{-1}.
\]

\subsection{The principle of $r$ to $t$ decay transfer}\label{subsec:r_to_t}

A challenge in establishing the global dispersive estimate \eqref{3.4} is the behavior of the solution in the interior region, $\{ |x| \ll t \}$. While local Sobolev inequalities can yield spatial decay estimates of the form $|\psi(t,x)| \lesssim \langle x \rangle^{-1}$, such bounds become trivial as $|x| \to 0$ and fail to provide the necessary temporal decay.

To overcome this, we employ a mechanism that converts spatial pointwise decay into temporal pointwise decay via integrated local energy decay (ILED) estimates. This was established in prior works; see \cite{MTT} for the context of stationary black hole spacetimes and, more directly, \cite[Section 8]{Looi2021} for a setting that generalizes the one considered here. 

We begin by defining the spacetime regions where the transfer occurs. For a dyadic time $T \ge 8$, let
\begin{align*}
    C_T^{\mathrm{int}} &:= \left\{ (t,x) \in \mathbb{R} \times \mathbb{R}^3 : \frac{1}{2}T \le t \le T, \ |x| \le \frac{t}{2} \right\}, \\
    \tilde{C}_T^{\mathrm{int}} &:= \left\{ (t,x) \in \mathbb{R} \times \mathbb{R}^3 : \frac{1}{4}T \le t \le T, \ |x| \le \frac{3t}{4} \right\}.
\end{align*}
The following lemma localizes the ILED estimate to these interior regions, providing the key tool for the transfer. Here, we write $\psi_{\le k}$ for the collection of functions generated by applying up to $k$ vector-fields from a standard set (translations, rotations, and the scaling vector-field, but \textit{not} Lorentz boosts) to $\psi$.

\begin{lemma}[Localized Interior ILED]\label{lem:localized-iled}
Let $\psi$ be a solution to $P\psi = F$ on the slab $[T/4, T] \times \mathbb{R}^3$. Then, for a sufficiently large integer $k$, its local energy norm is controlled by
\begin{equation}\label{eq:l2int}
    \|\psi\|_{LE^1(C_T^{\mathrm{int}})} \lesssim T^{-1} \|\langle r \rangle \psi_{\le k}\|_{LE^1(\tilde{C}_T^{\mathrm{int}})} + \|F_{\le k}\|_{LE^*(\tilde{C}_T^{\mathrm{int}})}.
\end{equation}
\end{lemma}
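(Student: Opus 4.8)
The plan is to obtain \eqref{eq:l2int} by combining the higher-order integrated local energy decay (ILED) estimate on the full slab with a spatial cutoff localizing to the interior light cone, and then absorbing the commutator terms using the weighted ILED norm and the vector-field structure. First I would introduce a smooth cutoff $\chi = \chi(t,x)$ that equals $1$ on $C_T^{\mathrm{int}}$ and is supported in $\tilde C_T^{\mathrm{int}}$, arranged so that $\chi$ depends on $(t,x)$ only through the ratios $t/T$ and $|x|/t$; this makes every derivative of $\chi$ of size $O(T^{-1})$ on the relevant region and, crucially, expressible in terms of the admissible vector fields (translations, rotation, scaling) applied with an extra $T^{-1}$ gain. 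Then $\psi_\chi := \chi\psi$ solves $P\psi_\chi = \chi F + [P,\chi]\psi$, where the commutator $[P,\chi]\psi$ is a combination of $\partial\chi\cdot\partial\psi$ and $(\partial^2\chi)\psi$ terms, each carrying at least one power of $T^{-1}$ and supported in $\tilde C_T^{\mathrm{int}}$.

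Next I would apply the interior ILED estimate (the higher-order version recalled from \cite{Looi_Tohaneanu_2021,Looi2022}, cf. \cref{thm:ILED0,thm:high-order-ILED}) to $\psi_\chi$ on the slab $[T/4,T]\times\mathbb R^3$. Since $\psi_\chi$ is globally supported in $\{|x|\le 3t/4\}$, the boundary/exterior terms in ILED that would normally appear near the light cone $|x|\sim t$ vanish, and one gets $\|\psi_\chi\|_{LE^1} \lesssim \|\chi F\|_{LE^*} + \|[P,\chi]\psi\|_{LE^*}$, possibly after including lower-order vector-field copies on the right. The term $\|\chi F\|_{LE^*}$ is dominated by $\|F_{\le k}\|_{LE^*(\tilde C_T^{\mathrm{int}})}$. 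For the commutator term, I would use that each summand is $O(T^{-1})$ times either $\partial\psi$ or $\langle r\rangle^{-1}\psi$ on $\tilde C_T^{\mathrm{int}}$ (using $|x|\lesssim T$ there), and that the $LE^*$ norm of such a term is controlled by $T^{-1}$ times an $LE^1$-type norm of $\psi$ with a weight $\langle r\rangle$; rewriting $\partial\chi$ and $\partial^2\chi$ through the admissible vector fields converts the raw derivatives of $\psi$ into the $\psi_{\le k}$ collection, yielding $T^{-1}\|\langle r\rangle \psi_{\le k}\|_{LE^1(\tilde C_T^{\mathrm{int}})}$. Finally, since $\chi\equiv 1$ on $C_T^{\mathrm{int}}$, the left side $\|\psi_\chi\|_{LE^1}$ bounds $\|\psi\|_{LE^1(C_T^{\mathrm{int}})}$ from below, giving \eqref{eq:l2int}.

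The main obstacle I anticipate is bookkeeping the interaction between the cutoff derivatives and the $LE^1$/$LE^*$ norm pairing: one must verify that the $T^{-1}$ gains from differentiating $\chi$ are not eaten by the weight $\langle r\rangle \sim T$ appearing on the right, so the two effects must be tracked simultaneously rather than estimated separately, and the choice of which vector fields to use to express $\partial\chi$ is what makes this work (the scaling field $t\partial_t + r\partial_r$ is the natural one since $\chi$ depends on $t/T$ and $r/t$). A secondary technical point is ensuring the higher-order ILED estimate we invoke does indeed apply on a finite dyadic slab with the exterior terms genuinely absent because of the support of $\psi_\chi$ — this is where the condition $|x|\le 3t/4$ in $\tilde C_T^{\mathrm{int}}$, staying strictly inside the cone, is used.
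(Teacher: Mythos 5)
Your overall strategy coincides with the paper's: cut off with $\chi$ adapted to $C_T^{\mathrm{int}}\subset\tilde C_T^{\mathrm{int}}$ with $|\partial^j\chi|\lesssim T^{-j}$, apply ILED to $\psi^{\mathrm{int}}=\chi\psi$ on $[T/4,T]$, and absorb $[P,\chi]\psi$ into $T^{-1}\|\langle r\rangle\psi_{\le k}\|_{LE^1(\tilde C_T^{\mathrm{int}})}$; your bookkeeping of the $T^{-1}$ gain against the $\langle r\rangle\sim T$ weight is exactly the right concern and is resolved as you describe.

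There is, however, one genuine gap. The ILED estimate you invoke on the slab $[T/4,T]$ has the form
\begin{equation*}
\|w\|_{LE^1[T/4,T]}^2+\sup_t E(t)\ \lesssim\ E(T/4)+\|Pw\|_{LE^*[T/4,T]}^2,
\end{equation*}
so applying it to $\psi^{\mathrm{int}}$ produces the fixed-time energy $E[\psi^{\mathrm{int}}](T/4)$ on the right-hand side, and this term does not appear in your claimed inequality $\|\psi_\chi\|_{LE^1}\lesssim\|\chi F\|_{LE^*}+\|[P,\chi]\psi\|_{LE^*}$. Your justification for dropping boundary contributions --- that $\psi_\chi$ is supported in $\{|x|\le 3t/4\}$ so ``exterior terms near the light cone vanish'' --- addresses the wrong boundary: the ILED estimates in this paper are global in space and have no light-cone boundary terms to begin with; the term that survives is the initial time-slice energy at $t=T/4$, which is not controlled by the spacetime norms on the right of \eqref{eq:l2int} without further argument. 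The paper handles this explicitly via a Morawetz-type argument based on the scaling vector field, which converts the fixed-time energy into a spacetime integral and supplies the same $T^{-1}$ factor. Alternatively, you could repair your version by building a temporal factor into $\chi$ that vanishes at $t=T/4$ (so $\psi^{\mathrm{int}}[T/4]=0$), at the cost of additional $\partial_t\chi$ contributions to $[P,\chi]\psi$, which are then absorbed exactly as your spatial commutator terms are; but as written, the omission of the $t=T/4$ energy term is a step that would fail.
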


\begin{proof}[Proof sketch]
The proof applies the ILED estimate from \cref{sec:high-order-LE} to a spatially localized function. Let $\chi$ be a smooth cutoff function with $\chi \equiv 1$ on $C_T^{\mathrm{int}}$ and $\mathrm{supp}(\chi) \subset \tilde{C}_T^{\mathrm{int}}$, satisfying derivative bounds $|\partial^j \chi| \lesssim T^{-j}$. We consider the function $\psi^{\mathrm{int}} := \chi \psi$, which solves the equation
\[
    P\psi^{\mathrm{int}} = \chi F + [P, \chi] \psi.
\]
The ILED estimate for $\psi^{\mathrm{int}}$ on the time interval $[T/4, T]$ controls $\|\psi^{\mathrm{int}}\|_{LE^1}$ by its initial energy at $t=T/4$ and the $LE^*$ norms of the source terms. We observe that the commutator $[P, \chi]\psi$ is supported where $\nabla \chi \neq 0$ and involves derivatives of $\chi$. Its $LE^*$ norm can therefore be bounded by
\[
    \|([P, \chi]\psi)_{\le k}\|_{LE^*(\tilde{C}_T^{\mathrm{int}})} \lesssim T^{-1} \|\langle r \rangle \psi_{\le k}\|_{LE^1(\tilde{C}_T^{\mathrm{int}})}.
\]
The boundary energy term at $t=T/4$ is controlled by spacetime norms using a Morawetz-type argument based on the scaling vector-field, which introduces the same $T^{-1}$ scaling. For a detailed proof of this argument, see \cite{MTT} in the context of perturbations of the Schwarzschild and Kerr spacetimes, and \cite[Section 8]{Looi2021} for a more general setting that includes the one considered in this article. Combining these bounds yields the desired estimate \cref{eq:l2int}.
\end{proof}

This lemma provides the engine for the $r$-to-$t$ transfer.

\begin{proposition}\label{prop:r-to-t}
Let $\psi$ be a solution to the linear homogeneous equation $P\psi = 0$. Assume that in the interior region $\{|x| \le 3t/4\}$, $\psi$ and its vector-fields satisfy the spatial decay bound
\begin{equation}\label{eq:spatial-decay-hyp}
    |\psi_{\le k}(t,x)| \lesssim \langle x \rangle^{-1}
\end{equation}
for a sufficiently large integer $k$. Then, for $t \in [T/2, T]$, $\psi$ satisfies the temporal decay bound
\begin{equation}\label{eq:temporal-decay-conc}
    |\psi(t,x)| \lesssim \langle t \rangle^{-1}, \quad \text{for } |x| \le t/2.
\end{equation}
\end{proposition}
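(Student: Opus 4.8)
The plan is to execute the ILED-based ``$r$-to-$t$'' transfer. Since $P\psi=0$, the source term in \cref{lem:localized-iled} vanishes, so the lemma already gives
\[
\|\psi\|_{LE^1(C_T^{\mathrm{int}})}\ \lesssim\ T^{-1}\,\|\langle r\rangle\,\psi_{\le k}\|_{LE^1(\tilde C_T^{\mathrm{int}})}.
\]
To feed this into a Sobolev embedding I would first upgrade it to the analogous control for the family $\psi_{\le m}$ with $m<k$ moderately large: each member of $\psi_{\le m}$ solves $P(\cdot)$ equal to a commutator of $P$ with admissible vector fields applied to $\psi$, which the higher-order local energy machinery of \cref{sec:high-order-LE} (following \cite{Looi2022}) recognizes as an admissible $LE^*$ source built from $\psi_{\le m+1}$ and spatial derivatives of $h$. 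Applying \cref{lem:localized-iled} to each member then yields $\|\psi_{\le m}\|_{LE^1(C_T^{\mathrm{int}})}\lesssim T^{-1}\|\langle r\rangle\psi_{\le k}\|_{LE^1(\tilde C_T^{\mathrm{int}})}$, provided $k$ is taken large enough in \eqref{eq:spatial-decay-hyp}.

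Next I would insert the spatial decay hypothesis into the right-hand side. On $\tilde C_T^{\mathrm{int}}\subset\{|x|\le 3t/4\}$, \eqref{eq:spatial-decay-hyp} gives $\langle r\rangle\,|\psi_{\le k}|\lesssim 1$, and—using the hypothesis with one further vector field, which is why $k$ is taken large—also $|\partial(\langle r\rangle\psi_{\le k})|\lesssim 1$, so the $\langle r\rangle$ weight is exactly absorbed by the $\langle x\rangle^{-1}$ decay and the right-hand side is finite, bounded by a power of $T$ coming only from the finite spacetime volume of $\tilde C_T^{\mathrm{int}}$. It is the factor $T^{-1}$ in \cref{lem:localized-iled}—traced, in the proof of that lemma, both to the width $\sim T$ of the cutoff annulus (so $|\partial\chi|\lesssim T^{-1}$ in the commutator $[P,\chi]\psi$) and to the scaling-vector-field Morawetz bound on the boundary energy at $t=T/4$—that encodes the dispersion and is responsible for the temporal gain.

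It then remains to convert the weighted local-energy control on $C_T^{\mathrm{int}}$ into the pointwise bound \eqref{eq:temporal-decay-conc}. Fix $(t,x)$ with $t\in[T/2,T]$ and $|x|\le t/2$. If $|x|\gtrsim t$ then $\langle x\rangle^{-1}\sim\langle t\rangle^{-1}$ and \eqref{eq:temporal-decay-conc} follows at once from \eqref{eq:spatial-decay-hyp}, so I may assume $\rho:=|x|\ll t\sim T$. I would work on a spacetime ball of radius comparable to $\rho$ centered at $(t,x)$, sitting inside a mild enlargement of $C_T^{\mathrm{int}}$, on which $\langle r\rangle\sim\rho$; rescaling it to the unit ball and invoking the $4$-dimensional embedding $H^s\hookrightarrow L^\infty$ with $s>2$ (so that only $\psi_{\le m}$, $m$ as above, enters) trades the $\langle r\rangle$-weights of the $LE^1$ norm against the volume factors of the rescaling. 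Summing over the dyadic shells $\rho\sim 2^j$ with $1\le 2^j\lesssim T$, the powers of $\rho$ cancel and the $T^{-1}$ gain survives, leaving $|\psi(t,x)|\lesssim\langle t\rangle^{-1}$ on $|x|\le t/2$. This is exactly the $r$-to-$t$ transfer carried out in \cite{MTT} for perturbations of Schwarzschild/Kerr and, in a setting that includes the present one, in \cite[Section~8]{Looi2021}, whose arguments I would follow.

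The main obstacle is the bookkeeping in this last step, together with the proof of \cref{lem:localized-iled}: one must verify that both error contributions in that lemma—the commutator $[P,\chi]\psi$, supported on the annulus between $C_T^{\mathrm{int}}$ and $\tilde C_T^{\mathrm{int}}$, and the boundary energy of $\chi\psi$ at $t=T/4$—carry precisely one power of $T^{-1}$, and that in the Sobolev step the $\langle r\rangle$-weights, the dyadic volume factors, and the number of vector fields consumed all balance so that the final power of $\langle t\rangle$ is exactly $-1$ with no residual positive power of $\rho$. These are the delicate computations performed in the cited references, and reproducing them in our notation is the only real work.
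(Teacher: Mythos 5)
Your proposal follows the same architecture as the paper's proof: apply the localized interior ILED of \cref{lem:localized-iled} with $F=0$, insert the spatial decay hypothesis \eqref{eq:spatial-decay-hyp} into its right-hand side, and convert the resulting integrated bound into a pointwise one via a local Sobolev inequality adapted to the interior region (the dyadic rescaling you describe is exactly how the inequalities cited from \cite{MTT} and \cite{Looi2021} are proved). Your reduction to the regime $|x|\ll t$ and your identification of where the factor $T^{-1}$ in the lemma comes from are both consistent with the paper.

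The one step where your accounting, as written, would not close is the bound on $\|\langle r\rangle\psi_{\le k}\|_{LE^1(\tilde{C}_T^{\mathrm{int}})}$. You claim it is ``bounded by a power of $T$ coming only from the finite spacetime volume of $\tilde{C}_T^{\mathrm{int}}$,'' having established only $|\langle r\rangle\psi_{\le k}|\lesssim 1$ and $|\partial(\langle r\rangle\psi_{\le k})|\lesssim 1$. With merely a bounded density, the weighted integrand $\langle r\rangle^{-1-\gamma}|\partial(\langle r\rangle\psi_{\le k})|^2$ integrates over $\{|x|\le 3T/4\}\times[T/4,T]$ to roughly $T^{3-\gamma}$, so the norm would be $O(T^{(3-\gamma)/2})$; after the factor $T^{-1}$ from the lemma and the factor $T^{-1/2}$ from the interior Sobolev inequality, this leaves only $T^{-\gamma/2}$, far short of the claimed $\langle t\rangle^{-1}$. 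The paper's proof instead uses the decay hypothesis to make the weighted density decay like $\langle r\rangle^{-3-\gamma}$, so that the \emph{spatial} integral on each time slice is bounded uniformly in $T$ and only the time integration over an interval of length $\sim T$ contributes, giving $\|\langle r\rangle\psi_{\le k}\|_{LE^1(\tilde{C}_T^{\mathrm{int}})}\lesssim T^{1/2}$; then $T^{-1}\cdot T^{1/2}=T^{-1/2}$ for the local energy on $C_T^{\mathrm{int}}$, and the interior Sobolev inequality supplies one further $T^{-1/2}$, yielding exactly $\langle t\rangle^{-1}$. You correctly flag this balancing as ``the only real work,'' but the specific exponents $T^{1/2}$, $T^{-1}$, $T^{-1/2}$ are the entire content of the proposition, and your volume-counting heuristic for the first of them gives the wrong power.
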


\begin{proof}[Proof sketch]
We use \cref{lem:localized-iled} with $F=0$. The hypothesis \cref{eq:spatial-decay-hyp} implies that the weighted energy density in the definition of the $LE^1$ norm, which involves terms like $\langle r \rangle^{-1-\gamma}|\nabla(\langle r \rangle \psi_{\le k})|^2$, decays like $\langle r \rangle^{-3-\gamma}$. Integrating this density over the spatial domain $\{|x| \le 3T/4\}$ shows that the norm of the weighted energy on any time slice is bounded uniformly in $T$. Integrating over the time interval $[T/4, T]$ yields
\[
    \|\langle r \rangle \psi_{\le k}\|_{LE^1(\tilde{C}_T^{\mathrm{int}})} \lesssim T^{1/2}.
\]
Substituting this into \cref{eq:l2int}, we obtain this bound on the local energy in the inner core:
\[
    \|\psi_{\le k}\|_{LE^1(C_T^{\mathrm{int}})} \lesssim T^{-1} \cdot T^{1/2} = T^{-1/2}.
\]
The proof concludes by combining this integrated energy bound into a pointwise estimate. A local Sobolev inequality, adapted to the geometry of the interior region (cf. \cite[Corollary 3.9]{MTT} or \cite[Corollary 3.2]{Looi2021}), yields
\[
    |\psi(t,x)| \lesssim T^{-1/2} \|\psi_{\le 3}\|_{LE^1(C_T^{\mathrm{int}})}, \quad \text{for } (t,x) \in C_T^{\mathrm{int}}.
\]
Combining these two estimates, we find that for any $t \in [T/2, T]$ and $|x| \le t/2$,
\[
    |\psi(t,x)| \lesssim T^{-1/2} \cdot (T^{-1/2}) = T^{-1}.
\]
Since $t \sim T$ in this region, this is equivalent to the desired bound $|\psi(t,x)| \lesssim \langle t \rangle^{-1}$.
\end{proof}

\paragraph{{Proof of \cref{3.10}}}
Plugging $(\ref{3.4})$ into $(\ref{3.8})$, by the estimates on the metric,
\begin{equation}\label{3.10}
|\partial_{\alpha} (h^{\alpha \beta} \partial_{\beta} ) \phi|(\tau, y) \lesssim_{A} \frac{1}{\langle \tau - s \rangle} \langle y \rangle^{-3 - \delta}.
\end{equation}
To see this, we begin by expanding $\partial_\alpha(h^{\alpha\beta}\partial_\beta\phi)
= h^{\alpha\beta}\partial_{\alpha\beta}\phi+(\partial_\alpha h^{\alpha\beta})\partial_\beta\phi$
and using \eqref{eq:hyp-d} together with the dispersive bound \eqref{3.4}. We have
\[
|h^{\alpha\beta}\partial_{\alpha\beta}\phi(\tau,y)|\lesssim
\langle y\rangle^{-3-\delta}\,\langle\tau-s\rangle^{-1}.
\]
\[
|(\partial_\alpha h^{\alpha\beta})\partial_\beta\phi(\tau,y)|
\lesssim \langle y\rangle^{-3-\delta}\,\langle\tau-s\rangle^{-1}.
\]
Combining the two contributions yields
\[
|\partial_\alpha(h^{\alpha\beta}\partial_\beta\phi)(\tau,y)|
\lesssim \langle\tau-s\rangle^{-1}\,\langle y\rangle^{-3-\delta},
\]
which is \eqref{3.10}.

\paragraph{Returning to the main proof} Making the one--dimensional reduction, if $|x| = r$,
\begin{equation}\label{3.11}
\begin{aligned}
|v(t,x)| = \left|\int_{s}^{t} S_{0}(t-\tau)\,(0,\Box v)(x)\,d\tau\right|
&\lesssim_{A} \frac{1}{2r} \int_{0}^{t-s} \frac{1}{\langle \tau \rangle}
  \int_{|r-(t-s-\tau)|}^{\,r+t-s-\tau} \rho\,\langle \rho \rangle^{-3-\delta}\, d\rho\, d\tau \\
&\lesssim_{A} \frac{1}{r} \int_{0}^{t-s}
  \langle r-(t-s-\tau) \rangle^{-1-\delta}\,\langle \tau \rangle^{-1}\, d\tau \\
&\lesssim_{A} \frac{1}{r}\Bigg[
  \int_{0}^{\,|r-t+s| - \frac{|(t-s)-r|}{2}}
      \langle \tau \rangle^{-1}\langle r-(t-s-\tau) \rangle^{-1-\delta}\, d\tau \\
&\qquad\quad+
  \int_{\,|r-t+s| - \frac{|(t-s)-r|}{2}}^{\,|r-t+s| + \frac{|(t-s)-r|}{2}}
      \langle \tau \rangle^{-1}\langle r-(t-s-\tau) \rangle^{-1-\delta}\, d\tau \\
&\qquad\quad+
  \int_{\,|r-t+s| + \frac{|(t-s)-r|}{2}}^{\infty}
      \langle \tau \rangle^{-1}\langle r-(t-s-\tau)\rangle^{-1-\delta}\, d\tau
\Bigg] \\
&\lesssim_{\delta} \frac{1}{r}\,\langle (t-s)-r \rangle^{-1}.
\end{aligned}
\end{equation}

The first line in \cref{3.11} follows directly from \cref{3.10}.

We explain the final line in \cref{3.11}. We have split the integral into three, with the middle one centred around $\tau=|r-t+s|$. 
Write $T:=t-s$, $a:=|T-r|$, and $\langle x\rangle=(1+x^{2})^{1/2}$. Since the integrand is
nonnegative we may extend $\int_{0}^{T}$ to $\int_{0}^{\infty}$. Note that
$|r-t+s|=|r-T|=a$, so the three cutoffs are $[0,\frac a2]$, $[\frac a2,\frac{3a}{2}]$, $[\frac{3a}{2},\infty)$.
Also $r-(T-\tau)=(r-T)+\tau=\varepsilon a+\tau$ with $\varepsilon=\mathrm{sgn}(r-T)\in\{\pm1\}$.

Thus we need to bound
\[
I:=\int_{0}^{\infty}\langle\tau\rangle^{-1}\,\langle\varepsilon a+\tau\rangle^{-1-\delta}\,d\tau
=I_1+I_2+I_3,
\]
with $\tau\in[0,\frac a2]$, $[\frac a2,\frac{3a}{2}]$, $[\frac{3a}{2},\infty)$ respectively.

\emph{(i) $\tau\in[0,\frac a2]$.} By $|\varepsilon a+\tau|\ge a-\tau\ge a/2$,
\[
I_1 \lesssim \langle a\rangle^{-1-\delta} \int_{0}^{a/2} \langle\tau\rangle^{-1}d\tau
 \lesssim_\delta \langle a\rangle^{-1},
\]
using $\log(1+a)\lesssim_\delta \langle a\rangle^{\delta}$.

\emph{(ii) $\tau\in[\frac a2,\frac{3a}{2}]$.} Here $\langle\tau\rangle\gtrsim\langle a\rangle$.
If $\varepsilon=+1$ then $\langle\varepsilon a+\tau\rangle=\langle a+\tau\rangle\gtrsim\langle a\rangle$, so
\[
I_2 \lesssim  (a)\,\langle a\rangle^{-2-\delta} \lesssim \langle a\rangle^{-1-\delta}\le \langle a\rangle^{-1}.
\]
If $\varepsilon=-1$, set $u=\tau-a\in[-a/2,a/2]$; then
\[
I_2=\int_{|u|\le a/2}\langle a+u\rangle^{-1}\langle u\rangle^{-1-\delta}\,du
 \lesssim \langle a\rangle^{-1} \int_{|u|\le a/2} \langle u\rangle^{-1-\delta}du
 \lesssim_\delta \langle a\rangle^{-1}.
\]

\emph{(iii) $\tau\in[\frac{3a}{2},\infty)$.} We have $\langle\tau\rangle\gtrsim\langle a\rangle$ and
$\langle\varepsilon a+\tau\rangle\gtrsim\langle\tau\rangle$, hence
\[
I_3 \lesssim \int_{3a/2}^{\infty}\langle\tau\rangle^{-2-\delta}d\tau
 \lesssim \langle a\rangle^{-1-\delta}\le \langle a\rangle^{-1}.
\]

Combining (i)-(iii),
\[
I \lesssim_\delta \langle a\rangle^{-1}=\langle (t-s)-r\rangle^{-1}.
\]
Multiplying by the prefactor $1/r$ yields
\[
\frac{1}{r}\Bigg[\cdots\Bigg] \lesssim_{\delta} \frac{1}{r}\,\langle (t-s)-r\rangle^{-1}.
\]

\vspace{.5cm}

We resume at the conclusion of \cref{3.11}. Making the $r$ to $t$ transfer, we have proved
\begin{equation}\label{3.12}
|\int_{s}^{t} S_{0}(t - \tau)(0, \Box v)(x_{0}) d\tau| \lesssim_{\delta, A} \frac{1}{\langle t - s \rangle} \langle (t - s) - r \rangle^{-1}.
\end{equation}
Let $\phi(t)$ be the solution with Cauchy data $\phi[s] = (\phi(s, \cdot), \partial_t\phi(s, \cdot))$ at time $s$. By definition of the full propagator $S(t,s)$,
\[
\phi(t) = S(t,s)\phi[s]
\]
By applying Duhamel's principle to the inhomogeneous equation $\Box\phi = F(\phi)$, we can express the same solution $\phi(t)$ as
\[
\phi(t) = S_0(t-s)\phi[s] + \int_s^t S_0(t-\tau)(0, F(\phi(\tau))) \,d\tau
\]
Equating these two expressions for $\phi(t)$ yields the identity
\[
S(t,s)\phi[s] = S_0(t-s)\phi[s] + \int_s^t S_0(t-\tau)(0, F(\phi(\tau))) \,d\tau
\]
Rearranging gives a formula for the operator difference:
\[
(S(t,s) - S_0(t-s))\phi[s] = \int_s^t S_0(t-\tau)(0, F(\phi(\tau))) \,d\tau
\]
Thus, by \cref{3.12},
\begin{equation}\label{3.13}
\begin{aligned}
\Bigl\| \int_{0}^{t_{0}-T}  \bigl[S(t,s)(0,u^{5}) - S_{0}(t-s)(0,u^{5})\bigr]\,ds \Bigr\|_{L^{\infty}}
&\lesssim_{\delta,A} \sup_{r\ge0}\int_{0}^{t_{0}-T} \frac{1}{\langle t-s\rangle}\,\langle (t-s)-r\rangle^{-1}\,ds \\
&\lesssim T^{-1+ \delta}
\end{aligned}
\end{equation}for any $\delta>0$. The second bound is proved in the following lemma.

\begin{lemma}
Let $t,t_0,T\in\mathbb{R}$ with $t>t_0>T>0$. For any $\delta\in(0,1)$,
\[
\sup_{r\ge 0}\int_{0}^{t_0-T}\frac{1}{\langle t-s\rangle\,\langle (t-s)-r\rangle}\,ds
\lesssim_\delta \langle T\rangle^{-1+\delta}.
\]
\end{lemma}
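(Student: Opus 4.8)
The plan is to reduce the estimate to a clean one‑dimensional convolution bound and then split the integral according to whether the running time variable lies near the diagonal or not; the only genuine difficulty is a logarithmic divergence at the diagonal, which is exactly what produces the $\delta$–loss.

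\textbf{Step 1: reduction.} First I would substitute $\sigma:=t-s$. As $s$ ranges over $[0,t_0-T]$, the variable $\sigma$ ranges over $[\,t-t_0+T,\,t\,]$, and since $t>t_0>T>0$ the left endpoint satisfies $t-t_0+T>T$ while the right endpoint is finite. Because the integrand is nonnegative, it follows that
\[
\sup_{r\ge 0}\int_{0}^{t_0-T}\frac{ds}{\langle t-s\rangle\,\langle (t-s)-r\rangle}
\;\le\;\sup_{r\ge 0}\int_{T}^{\infty}\frac{d\sigma}{\langle\sigma\rangle\,\langle\sigma-r\rangle},
\]
so it suffices to bound the right‑hand side by $\langle T\rangle^{-1+\delta}$.

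\textbf{Step 2: dichotomy near/far from the diagonal.} Fix $r\ge 0$ and split $[T,\infty)$ into $N_r:=\{\sigma\ge T:\ |\sigma-r|\le \sigma/2\}$ and its complement. On the complement one has $|\sigma-r|>\sigma/2$, hence $\langle\sigma-r\rangle\gtrsim\langle\sigma\rangle$, so that contribution is at most $\int_T^\infty \langle\sigma\rangle^{-2}\,d\sigma\lesssim\langle T\rangle^{-1}$, which is already stronger than needed. For $N_r$, note $N_r\subset[2r/3,2r]$, so $N_r$ is nonempty only when $2r\ge T$, i.e.\ $r\ge T/2$; on $N_r$ we have $\langle\sigma\rangle\asymp\langle r\rangle$ and $|\sigma-r|\le r$. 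Therefore
\[
\int_{N_r}\frac{d\sigma}{\langle\sigma\rangle\,\langle\sigma-r\rangle}
\;\lesssim\;\langle r\rangle^{-1}\int_{|u|\le r}\frac{du}{\langle u\rangle}
\;\lesssim_\delta\;\langle r\rangle^{-1}\langle r\rangle^{\delta}=\langle r\rangle^{-1+\delta},
\]
using $\int_{|u|\le r}\langle u\rangle^{-1}\,du=2\sinh^{-1}r\lesssim_\delta\langle r\rangle^{\delta}$. Since on this range $r\ge T/2$ and the exponent $-1+\delta$ is negative, $\langle r\rangle^{-1+\delta}\lesssim_\delta\langle T\rangle^{-1+\delta}$. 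Combining the two pieces gives $\int_T^\infty\langle\sigma\rangle^{-1}\langle\sigma-r\rangle^{-1}\,d\sigma\lesssim_\delta\langle T\rangle^{-1}+\langle T\rangle^{-1+\delta}\lesssim_\delta\langle T\rangle^{-1+\delta}$, uniformly in $r$.

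\textbf{Main obstacle.} The essential point, and the only place any care is needed, is the logarithmic divergence of $\int\langle\sigma-r\rangle^{-1}\,d\sigma$ across the diagonal $\sigma=r$: this is genuine, and the lemma is false with $\delta=0$. What rescues the decay in $T$ is the structural observation that the diagonal region can only sit at scale $r\gtrsim T$, so the accompanying weight $\langle r\rangle^{-1}$ — which is $\lesssim\langle T\rangle^{-1}$ there — absorbs the logarithm at the cost of the arbitrarily small power $\delta$. Everything else is elementary one‑variable calculus.
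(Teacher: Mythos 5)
Your proof is correct, but it takes a genuinely different route from the paper's. Both arguments begin with the same substitution $\sigma=t-s$ and the same observation that the integration range lies in $[T,\infty)$, but from there the paper applies H\"older's inequality with exponents $p$ and $q=p/(p-1)$: the factor $\bigl(\int\langle\sigma\rangle^{-p}\,d\sigma\bigr)^{1/p}\lesssim\langle T\rangle^{-1+1/p}$ supplies the decay, while $\bigl(\int_{\mathbb R}\langle z\rangle^{-q}\,dz\bigr)^{1/q}$ is an $r$-uniform constant, and one sets $\delta=1/p$. You instead decompose $[T,\infty)$ into the near-diagonal set $N_r=\{|\sigma-r|\le\sigma/2\}$ and its complement, getting $\langle T\rangle^{-1}$ off the diagonal and $\langle r\rangle^{-1+\delta}\lesssim\langle T\rangle^{-1+\delta}$ on it via the containment $N_r\subset[2r/3,2r]$ (which forces $r\gtrsim T$) and the logarithmic bound $\int_{|u|\le r}\langle u\rangle^{-1}du\lesssim_\delta\langle r\rangle^{\delta}$. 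The paper's argument is shorter and hides the loss inside the choice of exponent; yours is slightly longer but isolates exactly where the $\delta$ is spent (the logarithm across the diagonal) and shows that the off-diagonal contribution actually achieves the sharper rate $\langle T\rangle^{-1}$, which also substantiates your remark that the estimate fails at $\delta=0$. All the individual steps check out: the extension of the integral to $[T,\infty)$ by nonnegativity, the equivalences $\langle\sigma-r\rangle\gtrsim\langle\sigma\rangle$ off $N_r$ and $\langle\sigma\rangle\asymp\langle r\rangle$ on $N_r$, and the final monotonicity step using $-1+\delta<0$.
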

The idea of the proof is as follows. After setting $y=t-s$, the integration interval becomes $[t-t_0+T,t]\subset[T,\infty)$, which yields the uniform lower bound $\langle y\rangle\gtrsim \langle T\rangle$; applying Hölder with any exponent $>1$ turns the remaining factor $\langle y-r\rangle^{-1}$ into an $r$-uniform constant and gives the decay $\lesssim \langle T\rangle^{-1+\delta}$ with $\delta=\tfrac{1}{p}$.
\begin{proof}
Let $y=t-s$. Then $y\in[a,b]$ with $a=t-t_0+T\ge T$ and $b=t$. Fix $p>1$ and set $q=\frac{p}{p-1}$. By Hölder,
\[
\int_{a}^{b}\frac{dy}{\langle y\rangle\,\langle y-r\rangle}
\le
\Big(\int_{a}^{b}\langle y\rangle^{-p}\,dy\Big)^{1/p}
\Big(\int_{a}^{b}\langle y-r\rangle^{-q}\,dy\Big)^{1/q}.
\]
The second factor is uniformly bounded in $r$ by
\[
\Big(\int_{\mathbb{R}}\langle z\rangle^{-q}\,dz\Big)^{1/q}<\infty
\qquad(q>1).
\]
For the first factor, since $a\ge T$,
\[
\int_{a}^{\infty}\langle y\rangle^{-p}\,dy \lesssim_p \langle a\rangle^{1-p}
\lesssim_p \langle T\rangle^{1-p},
\]
hence
\[
\int_{a}^{b}\frac{dy}{\langle y\rangle\,\langle y-r\rangle}
\lesssim_p \langle T\rangle^{-(p-1)/p}.
\]
Taking the supremum in $r\ge0$ and writing $\delta=\frac{1}{p}\in(0,1)$ yields
\[
\sup_{r\ge0}\int_{0}^{t_0-T}\frac{ds}{\langle t-s\rangle\,\langle (t-s)-r\rangle}
\lesssim_\delta \langle T\rangle^{-1+\delta}.
\]
\end{proof}

\section{Uniform energy bounds and integrated local energy decay for high orders}\label{sec:high-order-LE}

This section collects the local energy decay and associated estimates that make the interaction Morawetz and dispersive arguments go through with uniform constants on asymptotically flat backgrounds.

Let $g^{\mu\nu}=m^{\mu\nu}+h^{\mu\nu}$ on $\mathbb{R}^{1+3}$ and $Pu=\partial_\mu(g^{\mu\nu}\partial_\nu u)$. We study strong solutions of
\begin{equation}\label{eq:main-nl}
Pu=u^5\quad\text{on }(T_1,T_2)\times\mathbb{R}^3.
\end{equation}
Write $\langle r\rangle=(1+|x|^2)^{1/2}$, fix $0<\gamma\ll1$, and set
\[
\|w\|_{LE^1[t_1,t_2]}^2:= \int_{t_1}^{t_2}   \int \frac{|\nabla_{t,x}w|^2}{\langle r\rangle^{1+\gamma}}+\frac{|w|^2}{\langle r\rangle^{3+\gamma}}\,dx\,dt,\qquad
\|F\|_{LE^*[t_1,t_2]}^2:= \int_{t_1}^{t_2}   \int \langle r\rangle\,|F|^2\,dx\,dt,
\]
and $E_N(t):=\sum_{|\alpha|\le N}\frac12\|\nabla_{t,x}\partial^\alpha u(t)\|_{L^2_x}^2$.

\begin{theorem}[ILED-0: integrated local energy decay and uniform energy bounds from Theorem 4.1 in \cite{Looi_Tohaneanu_2021}]\label{thm:ILED0}
Let $u$ solve
\[
\begin{cases}
P u = u^5 & (t,x)\in (0,\infty)\times\mathbb{R}^3,\qquad P=\partial_\alpha g^{\alpha\beta}\partial_\beta,\\[2pt]
u[0]\in \dot H^1\times L^2,
\end{cases}
\]
on a Lorentzian background $g=m+h$. Assume for some fixed $\gamma>0$ and sufficiently small $\epsilon>0$ that
\[
\partial_{t,x}^J h^{\alpha\beta}(t,x)\ \lesssim\ \epsilon\,\langle x\rangle^{-|J|-\gamma}\quad\text{for all multi-indices } J \text{ with }|J|\in \{0,1\}.
\]
Then for any $T_1<T_2$,
\begin{equation}\label{eq:ILED0}
\|u\|_{LE^1[T_1,T_2]}^2+E(T_2)\ \lesssim\ E(T_1),
\end{equation}
where
\[
E(t):=\int_{\{t\}\times\mathbb{R}^3}\Big(\frac12|\nabla_{t,x} u|^2+\frac16|u|^6\Big)\,dx,
\qquad
\|u\|_{LE^1[T_1,T_2]}^2
:=\iint_{[T_1,T_2]\times\mathbb{R}^3} 
\frac{|\nabla_{t,x} u|^2}{\langle r\rangle^{1+\gamma}}
+\frac{u^2}{\langle r\rangle^{3+\gamma}}
+\frac{|u|^6}{\langle r\rangle}\,dxdt.
\]
\end{theorem}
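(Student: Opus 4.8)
The plan is to prove \eqref{eq:ILED0} exactly as in \cite[Theorem~4.1]{Looi_Tohaneanu_2021}: couple a near‑conservation of energy to a variable–coefficient Morawetz (local energy decay) estimate, and close the two against each other using the smallness of $\epsilon$. The only point not already contained in that reference is the bookkeeping of the defocusing nonlinearity $u^5$ in both identities, which — far from obstructing the argument — supplies the extra nonnegative terms $\tfrac16|u|^6$ in $E$ and $|u|^6/\langle r\rangle$ in $\|\cdot\|_{LE^1}$. Throughout one argues first for smooth solutions and then passes to the limit, so all integrations by parts below are legitimate; one works on a fixed slab $[T_1,T_2]$ and closes the final absorption by a routine continuity argument in $T_2$, since $\|u\|_{LE^1[T_1,T_2]}$ is a priori finite and vanishes as $T_2\downarrow T_1$.

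\emph{Energy estimate.} Pair $Pu-u^5=0$ with $\partial_t u$ and integrate over $[T_1,t]\times\R^3$. Using $h^{0j}=0$ one writes $P=\partial_t(g^{00}\partial_t\,\cdot\,)+\partial_i(g^{ij}\partial_j\,\cdot\,)$, and the principal terms reconstruct $\tfrac{d}{dt}E(t)$; in particular $\int u^5\partial_t u=\tfrac16\tfrac{d}{dt}\int u^6$ is a time derivative of a nonnegative density, which is the source of the $\tfrac16|u|^6$ term and of the defocusing sign. Two corrections appear: a boundary correction $\int(h^{00}(\partial_t u)^2-h^{ij}\partial_i u\,\partial_j u)$, bounded by $\epsilon\,E(t)$ by the size hypothesis on $h$; and a spacetime error $\iint(\partial_t h^{\alpha\beta})\partial_\alpha u\,\partial_\beta u$, bounded pointwise by $\epsilon\langle r\rangle^{-1-\gamma}|\nabla_{t,x}u|^2$ by the $|J|=1$ hypothesis on $\partial h$, hence by $\epsilon\|u\|_{LE^1[T_1,T_2]}^2$. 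Absorbing the small boundary correction gives
\[
\sup_{t\in[T_1,T_2]}E(t)\ \lesssim\ E(T_1)+\epsilon\,\|u\|_{LE^1[T_1,T_2]}^2 .
\]

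\emph{Morawetz estimate.} Write $\Box u=u^5-\partial_\alpha(h^{\alpha\beta}\partial_\beta u)$ and test against $Q=f(r)\partial_r u+\tfrac12 q\,u$, where $X=f(r)\partial_r$, $q:=\operatorname{div}X=f'+\tfrac2r f$, and $f$ is a bounded increasing profile with $f'(r)\sim\langle r\rangle^{-1-\gamma}$, e.g. $f(r)=\int_0^r\langle s\rangle^{-1-\gamma}\,ds$. The flat part of the identity produces the nonnegative bulk density $f'(\partial_r u)^2+\tfrac fr(|\nabla_x u|^2-(\partial_r u)^2)-\tfrac14(\Delta q)\,u^2$; its first two terms have nonnegative coefficients both $\gtrsim\langle r\rangle^{-1-\gamma}$, so they dominate $\langle r\rangle^{-1-\gamma}|\nabla_x u|^2$, while for this profile $-\Delta q\sim\langle r\rangle^{-3-\gamma}>0$, so the last term directly yields $\iint\langle r\rangle^{-3-\gamma}u^2$. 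Control of $\langle r\rangle^{-1-\gamma}|\partial_t u|^2$ is recovered separately by pairing $Pu-u^5=0$ with the auxiliary multiplier $\langle r\rangle^{-1-\gamma}u$, whose identity bounds $\iint\langle r\rangle^{-1-\gamma}(\partial_t u)^2$ by a boundary term $\lesssim E$ plus the spatial pieces already controlled (plus $\epsilon$-small errors). The nonlinearity is favorable: $\iint u^5 Q=\tfrac13\iint q\,u^6$ enters on the positive side and, since $q\sim\langle r\rangle^{-1}$, produces the extra term $\iint|u|^6/\langle r\rangle$ of the $LE^1$ norm. The boundary terms of the Morawetz identity are $\lesssim\sup_{[T_1,T_2]}E$ because $f$ is bounded. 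Finally the perturbation term $-\iint\partial_\alpha(h^{\alpha\beta}\partial_\beta u)\,Q=\iint h^{\alpha\beta}\partial_\beta u\,\partial_\alpha Q$: its one dangerous piece, of the form $h\,\nabla u\,\nabla^2 u$, is rewritten as a perfect derivative of $h(\nabla u)^2$ plus a $(\partial h)(\nabla u)^2$ remainder, after which every error has the schematic form $|h|\langle r\rangle^{-1}|\nabla u|^2$, $|\partial h||\nabla u|^2$, or $|h|\langle r\rangle^{-2}|\nabla u|\,|u|$, each $\lesssim\epsilon$ times the $LE^1$ integrand by the $|J|\in\{0,1\}$ hypotheses. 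Collecting,
\[
\|u\|_{LE^1[T_1,T_2]}^2\ \lesssim\ \sup_{t\in[T_1,T_2]}E(t)+\epsilon\,\|u\|_{LE^1[T_1,T_2]}^2 .
\]

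\emph{Closing, and the main obstacle.} Substituting the energy bound into the Morawetz bound and absorbing the $\epsilon\|u\|_{LE^1}^2$ terms (permissible for $\epsilon$ small, by the continuity argument above) gives $\|u\|_{LE^1[T_1,T_2]}^2\lesssim E(T_1)$; feeding this back into the energy estimate yields $\sup_{[T_1,T_2]}E(t)\lesssim E(T_1)$, in particular $E(T_2)\lesssim E(T_1)$, which is \eqref{eq:ILED0}. I expect the Morawetz step to be the main obstacle: one must engineer the profile $f$ (and the auxiliary multiplier) so that the nonnegative bulk density genuinely dominates the full $LE^1$ integrand — including the time–derivative and the $u^2$ pieces — while simultaneously verifying that each variable–coefficient error carries a weight no worse than $\langle r\rangle^{-1-\gamma}$, so that the small parameter $\epsilon$ permits absorption. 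This delicate weight–matching is precisely the content of \cite[Theorem~4.1]{Looi_Tohaneanu_2021}, to which the routine details are deferred; the genuinely new ingredient here is only the favorable accounting of the $u^5$ terms in the energy and Morawetz identities.
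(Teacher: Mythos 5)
Your proposal is correct and follows essentially the same route as the paper: both are the standard variable-coefficient Morawetz multiplier argument with a bounded increasing radial profile whose derivative is $\sim\langle r\rangle^{-1-\gamma}$, with the $u^5$ term contributing favorably, the $h$-errors absorbed by $\epsilon$-smallness, and the details deferred to \cite[Theorem 4.1]{Looi_Tohaneanu_2021}. The only (immaterial) difference is organizational: the paper uses a single combined multiplier $a(r)u+b(r)\partial_r u+C\partial_t u$ with the non-self-adjoint choice $a=b/r$, which yields the $\langle r\rangle^{-1-\gamma}u_t^2$ control directly, whereas you run the energy and Morawetz identities separately with the self-adjoint zeroth-order correction and recover the $u_t^2$ piece from an auxiliary Lagrangian multiplier $\langle r\rangle^{-1-\gamma}u$, closing by absorption.
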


\begin{proof}[Proof sketch]
This is the standard Morawetz multiplier argument adapted to variable coefficients, and the full proof was published in \cite{Looi_Tohaneanu_2021}. 
Let
\[
b(r)=\sum_{j\ge0}2^{-j\gamma}\frac{r}{r+2^j},\qquad a(r)=\frac{b(r)}{r},
\]
and multiply $Pu=u^5$ by $a(r)u+b(r)\partial_r u+C\,\partial_t u$, with $C\gg1$.
Integrating by parts on $[T_1,T_2]\times\mathbb{R}^3$ yields
\[
E(T_2)
+\iint \Big[\frac12 b'(u_r^2+u_t^2)
+\Big(-\frac12 b'+\frac{b}{r}\Big)|\partial_\omega u|^2
-\frac12(\Delta a)\,u^2\Big]\,dxdt
\ \lesssim\ 
E(T_1)+\mathrm{Err},
\]
with 
\[
\mathrm{Err}\ \lesssim\ \big(\frac{|h|}{\langle x\rangle}+|\nabla_{t,x} h|\big)\Big(|\nabla_{t,x} u|^2+|\nabla_{t,x} u|\,\frac{|u|}{\langle x\rangle}\Big).
\]
The choice of $a,b$ gives (uniformly in $t$)
\[
b'\gtrsim \langle r\rangle^{-1-\gamma},\qquad 
-\Delta a\gtrsim \langle r\rangle^{-3-\gamma},\qquad 
\frac{2}{3}a-\frac{1}{6}b'\gtrsim \langle r\rangle^{-1},
\]
so the left-hand side controls $\|u\|_{LE^1[T_1,T_2]}^2+E(T_2)$. By the coefficient assumption
$|\partial^J h|\lesssim \epsilon\langle x\rangle^{-|J|-\gamma}$ with $|J|\le1$ and Cauchy-Schwarz,
\[
\mathrm{Err}\ \lesssim\ \epsilon \iint \Big(\frac{|\nabla_{t,x} u|^2}{\langle r\rangle^{1+\gamma}}
+\frac{u^2}{\langle r\rangle^{3+\gamma}}\Big)\,dxdt,
\]
which can be absorbed for $\epsilon$ small, giving \eqref{eq:ILED0} for classical solutions.
For strong solutions, approximate by classical data at time $T_1$, use local well-posedness and a partition of the time interval so that $\|u\|_{L^5_tL^{10}_x}$ is small on each subinterval, pass to the limit, and conclude \eqref{eq:ILED0}.
\end{proof}
Next, we state and partly prove the higher-order local energy decay and uniform energy bounds. These estimates require the high-order regularity condition \eqref{eq:hyp-c} from our main set of assumptions on the metric.

\begin{remark}[Derivative bookkeeping and the choice $N=4$]
\label{rem:derivative-bookkeeping}
This remark justifies the a priori $\dot H^5 \times \dot H^4$ regularity bound ($A$) required in Theorem~\ref{thm:main}. The bound arises from the maximum regularity needed across the different components of our proof, which do not stack. The principal requirements are dictated by the dispersive estimate, which we break down as follows:
\begin{enumerate}
    \item \textbf{Exterior region ($|x| > ct$):} Control in the exterior relies on an annular Sobolev inequality (Lemma~\ref{lem:radial-Sobolev-annulus}) applied to $\partial^2 u$. This step requires control of \emph{fifth-order derivatives} in $L^2$, which is provided by the high-order uniform energy bound. In terms of regularity required, this is the most demanding component of the entire proof.

    \item \textbf{Interior region ($|x| \ll t$):} Control in the interior is achieved via the $r \to t$ decay transfer (Proposition~\ref{prop:r-to-t}). This mechanism requires control of the integrated local energy decay up to \emph{third order}.

    \item \textbf{Morawetz argument:} The interaction Morawetz estimate is fundamentally an $\dot H^1$-level calculation. However, its application within the ``recent past'' argument critically relies on the high-regularity bound $A$ to make the output of the estimate useful via interpolation (see Eq.~\eqref{eq:morawetz_inequality} and the argument leading to \eqref{3.31}).
\end{enumerate}
The overall $\dot H^5$ requirement ($N=4$) is therefore dictated by the most demanding component, which is the exterior estimate.

This regularity requirement on the solution translates directly to the metric via the commutator estimates (Lemma~\ref{lem:comm-LEstar}). To control commutators involving $N$-th order derivatives of the solution, we need control of $(N+1)$-th order derivatives of the metric. Accordingly, our $N=4$ assumption on the solution necessitates the $|J|\le 5$ assumption on the metric in Assumption~\ref{eq:hyp-c}.
\end{remark}

We use the order 0 ILED as input (proved earlier in \cref{thm:ILED0}), as well as a Strichartz estimate. Thus if $Pw=F$ on $[t_1,t_2]$, then
\begin{align}
\|w\|_{LE^1[t_1,t_2]}^2+\sup_{t\in[t_1,t_2]}\|\nabla_{t,x}w(t)\|_{L^2_x}^2
&\lesssim \|\nabla_{t,x}w(t_1)\|_{L^2_x}^2+\|F\|_{LE^*[t_1,t_2]}^2,\label{eq:ILED0}\\
\|w\|_{L^5_tL^{10}_x([t_1,t_2])}
&\lesssim \|\nabla_{t,x}w(t_1)\|_{L^2_x}+\|F\|_{LE^*+L^{5/4}_tL^{10/9}_x([t_1,t_2])}.\label{eq:strichartz0}
\end{align}

\begin{lemma}[Commutator]\label{lem:comm-LEstar}
For $|\alpha|\le N$,
\begin{equation}\label{eq:comm-LEstar}
\|[P,\partial^\alpha]u\|_{LE^*[t_1,t_2]}
\ \lesssim\ \varepsilon\sum_{|\beta|\le|\alpha|}
\|\partial^\beta u\|_{LE^1[t_1,t_2]}.
\end{equation}
\end{lemma}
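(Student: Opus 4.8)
The plan is to prove the commutator bound \eqref{eq:comm-LEstar} by induction on $|\alpha|$, exploiting the fact that $P$ differs from the flat d'Alembertian only through the perturbation $h$. Writing $P = \Box + Q$ with $Qu = \partial_\mu(h^{\mu\nu}\partial_\nu u)$, and noting that $\Box$ has constant coefficients so $[\Box, \partial^\alpha] = 0$, the entire commutator is $[P,\partial^\alpha]u = [Q,\partial^\alpha]u$. By the Leibniz rule, $[Q,\partial^\alpha]u$ is a sum of terms in which at least one derivative falls on the coefficients $h^{\mu\nu}$; schematically, expanding $Q\partial^\alpha u - \partial^\alpha(Qu)$ and cancelling the top-order term $h^{\mu\nu}\partial_\mu\partial_\nu\partial^\alpha u$, one is left with a finite linear combination of terms of the form $(\partial^{\beta_1} h^{\mu\nu})\,\partial^{\beta_2}\partial_\mu\partial_\nu u$ and $(\partial^{\beta_1}\partial_\mu h^{\mu\nu})\,\partial^{\beta_2}\partial_\nu u$ with $|\beta_1|\ge 1$ and $|\beta_1| + |\beta_2| \le |\alpha|$, hence $|\beta_2| \le |\alpha| - 1$.

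First I would carry out this algebraic expansion carefully and record the list of resulting terms, keeping track of how many derivatives land on $h$ versus on $u$. The key point is that because at least one derivative always falls on $h$, we can invoke assumption \eqref{eq:hyp-c}: every factor $\partial^{\beta_1} h^{\mu\nu}$ with $1 \le |\beta_1| \le N+1 = 5$ obeys $|\partial^{\beta_1} h^{\mu\nu}| \lesssim \varepsilon \langle x\rangle^{-1-\gamma}$. Next I would estimate the $LE^*$ norm of a generic term. Using the pointwise bound on the coefficient, a term like $(\partial^{\beta_1} h)\,\partial^{\beta_2}\partial^2 u$ is controlled pointwise by $\varepsilon\langle x\rangle^{-1-\gamma} |\nabla_{t,x}\partial^{\beta_2}\partial u|$ (with $|\beta_2| + 1 \le |\alpha|$), and one then computes
\[
\|\langle x\rangle^{1/2}\,\varepsilon\langle x\rangle^{-1-\gamma}\,|\nabla_{t,x}\partial^{\gamma'} u|\|_{L^2_{t,x}}
= \varepsilon\,\Big\|\frac{|\nabla_{t,x}\partial^{\gamma'} u|}{\langle x\rangle^{1/2+\gamma}}\Big\|_{L^2_{t,x}}
\lesssim \varepsilon\,\|\partial^{\gamma'} u\|_{LE^1[t_1,t_2]},
\]
since $\langle x\rangle^{-1/2-\gamma} \le \langle x\rangle^{-(1+\gamma)/2}$ matches the weight $\langle x\rangle^{-(1+\gamma)/2}$ inside the $LE^1$ norm's gradient term (here $|\gamma'|\le|\alpha|$). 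The terms with two derivatives on $h$, i.e. $(\partial^{\beta_1}\partial_\mu h^{\mu\nu})\partial^{\beta_2}\partial_\nu u$, are handled identically — they have an even better spatial weight $\langle x\rangle^{-2-\gamma}$ from \eqref{eq:hyp-c}, which more than suffices. Summing the finitely many terms over $|\beta|\le|\alpha|$ gives \eqref{eq:comm-LEstar}.

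The main obstacle, such as it is, is bookkeeping rather than anything deep: one must confirm that after cancelling the top-order term, every surviving term genuinely has at least one derivative on $h$ (so that \eqref{eq:hyp-c}, not the merely bounded \eqref{eq:hyp-a}, applies) and that the worst term — one derivative on $h$, all remaining derivatives on $u$ producing $\partial^{|\alpha|-1}\partial^2 u = \partial^{|\alpha|+1}_{\text{at most }\nabla_{t,x}}$-type expressions — still only involves $\le |\alpha|$ derivatives of $u$ at the level of $\nabla_{t,x}\partial^\beta u$ with $|\beta|\le|\alpha|$. This is exactly why the sum on the right runs over $|\beta|\le|\alpha|$ and not $|\beta|\le|\alpha|-1$: the second-order operator $P$ eats two derivatives, one of which is redistributed to $h$, leaving up to $|\alpha|+1$ derivatives on $u$, which is $\nabla_{t,x}$ of $\partial^{|\alpha|}u$ and hence controlled by $\|\partial^{|\alpha|}u\|_{LE^1}$. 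I would also note in passing that no time regularity of $h$ beyond \eqref{eq:hyp-c} is needed here, since $\partial^\alpha$ ranges over all spacetime derivatives and \eqref{eq:hyp-c} already covers mixed derivatives $\partial^J h^{\mu\nu}$ for all $1\le|J|\le 5$.
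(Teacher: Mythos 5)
Your proposal is correct and follows essentially the same route as the paper's (sketched) proof: expand the commutator via Leibniz, observe that every surviving term carries at least one derivative on $h$ so that \eqref{eq:hyp-c} applies, and check that the weight $\langle r\rangle^{1/2}\cdot\varepsilon\langle r\rangle^{-1-\gamma}=\varepsilon\langle r\rangle^{-1/2-\gamma}$ is dominated by the $LE^1$ weight $\langle r\rangle^{-(1+\gamma)/2}$. Your version is in fact more careful about the derivative bookkeeping (in particular why the sum runs over $|\beta|\le|\alpha|$ and why $|J|\le N+1$ derivatives of $h$ suffice) than the paper's one-term sketch, which defers the remaining details to the cited reference.
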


\begin{proof}[Proof sketch]
Expanding $P=\partial_\mu(g^{\mu\nu}\partial_\nu)$ and commuting, one term will involve an estimate of
$(\partial^{\sigma+1}g)\,\partial v$ in $LE^*$:
\[
\|(\partial^{\sigma+1}g)\,\partial v\|_{LE^*}^2
=\int\langle r\rangle\,|\partial^{\sigma+1}g|^2\,|\partial v|^2dxdt
\ \lesssim\ \varepsilon^2 \int \langle r\rangle^{-1-2\gamma}\,|\partial v|^2
\ \lesssim\ \varepsilon^2\,\|v\|_{LE^1}^2,
\]
using \cref{eq:hyp-c} and $\langle r\rangle^{-1-2\gamma}\le\langle r\rangle^{-(1+\gamma)}$. The other terms are not very different. Summing over $\sigma$ and components gives \eqref{eq:comm-LEstar}. This is proved in \cite[Section 4]{Looi2022}. 
\end{proof}

\begin{theorem}[High-order ILED and uniform energies, from Theorem 4.7 in \cite{Looi2022}]\label{thm:high-order-ILED}
Let $N \in \mathbb{N}$ and assume \eqref{eq:hyp-c} holds up to order $N$ with $\varepsilon>0$ sufficiently small (depending on $N,\gamma$). Let $u$ be a strong solution to \eqref{eq:main-nl} on $(T_1,T_2)$. Then
\begin{equation}\label{eq:main-high}
\sum_{|\alpha|\le N} \Bigl(\|\partial^\alpha u\|_{LE^1[T_1,T_2]}^2+\sup_{t\in[T_1,T_2]}\|\nabla_{t,x}\partial^\alpha u(t)\|_{L^2_x}^2\Bigr)
\ \lesssim\ \sum_{|\alpha|\le N}\|\nabla_{t,x}\partial^\alpha u(T_1)\|_{L^2_x}^2.
\end{equation}
with an implicit constant depending only on $N$ and $\gamma$ (and independent of $T_1<T_2$).
\end{theorem}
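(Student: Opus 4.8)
\textbf{Proof plan for Theorem~\ref{thm:high-order-ILED}.} The plan is to induct on $N$, using the order-$0$ estimate \eqref{eq:ILED0} of \cref{thm:ILED0} as the base case and the commutator bound \eqref{eq:comm-LEstar} of \cref{lem:comm-LEstar} to close the induction step. Assume \eqref{eq:main-high} holds for all orders $\le N-1$. For a multi-index $\alpha$ with $|\alpha|=N$, set $w:=\partial^\alpha u$. Differentiating the equation $Pu=u^5$ gives
\begin{equation*}
Pw = \partial^\alpha(u^5) - [P,\partial^\alpha]u =: F_\alpha .
\end{equation*}
Apply \eqref{eq:ILED0} to $w$ on $[T_1,T_2]$:
\begin{equation*}
\|w\|_{LE^1[T_1,T_2]}^2 + \sup_{t}\|\nabla_{t,x}w(t)\|_{L^2_x}^2
\ \lesssim\ \|\nabla_{t,x}\partial^\alpha u(T_1)\|_{L^2_x}^2 + \|F_\alpha\|_{LE^*[T_1,T_2]}^2 .
\end{equation*}
It remains to bound the two pieces of $F_\alpha$ in $LE^*$. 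The commutator piece is handled directly by \cref{lem:comm-LEstar}:
\begin{equation*}
\|[P,\partial^\alpha]u\|_{LE^*[T_1,T_2]}
\ \lesssim\ \varepsilon \sum_{|\beta|\le N}\|\partial^\beta u\|_{LE^1[T_1,T_2]},
\end{equation*}
and the top-order term $|\beta|=N$ on the right can be absorbed into the left-hand side provided $\varepsilon$ is sufficiently small (depending on $N,\gamma$), while the lower-order terms $|\beta|\le N-1$ are controlled by the inductive hypothesis in terms of $\sum_{|\alpha|\le N-1}\|\nabla_{t,x}\partial^\alpha u(T_1)\|_{L^2_x}^2$.

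The remaining point is the nonlinear term $\partial^\alpha(u^5)$ in $LE^*$, i.e. bounding $\iint \langle r\rangle\,|\partial^\alpha(u^5)|^2\,dx\,dt$. By the Leibniz rule $\partial^\alpha(u^5)$ is a sum of terms $\partial^{\alpha_1}u\cdots\partial^{\alpha_5}u$ with $\alpha_1+\cdots+\alpha_5=\alpha$; distributing the weight $\langle r\rangle^{1/2}$ and the factors across a Hölder/Gagliardo--Nirenberg splitting, one places the single highest-order factor (order $\le N$) in an $LE^1$-type norm (paying one power of $\langle r\rangle^{-(1+\gamma)/2}$, which combines with the $\langle r\rangle^{1/2}$ weight favorably) and the remaining lower-order factors in $L^\infty_t\dot H^s$ or $L^\infty_t L^\infty_x$ norms controlled by the energy and by Sobolev embedding; since the equation is energy-critical and $N$ is large, all these factors are controlled by $\sum_{|\beta|\le N}\sup_t\|\nabla_{t,x}\partial^\beta u(t)\|_{L^2_x}$, which is precisely what appears on the left-hand side after the $LE^1$ estimate. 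This is the standard persistence-of-regularity mechanism; in the variable-coefficient setting it is carried out exactly as in \cite[Section 4]{Looi2022}, combined with the Strichartz estimate \eqref{eq:strichartz0} to handle the $L^5_tL^{10}_x$-type norms that appear when one distributes the nonlinearity. As in the base case, one first partitions $[T_1,T_2]$ into finitely many subintervals on which $\|u\|_{L^5_tL^{10}_x}$ is small (possible by \cref{thm:ILED0} together with \eqref{eq:strichartz0}), runs the above absorption argument on each subinterval with constants independent of the subinterval, and then chains the resulting estimates; the number of subintervals depends only on the energy, so the final constant depends only on $N$ and $\gamma$, uniformly in $T_1<T_2$.

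\textbf{Main obstacle.} The delicate step is the simultaneous absorption: both the top-order commutator term $\varepsilon\|\partial^\alpha u\|_{LE^1}$ and the top-order part of the nonlinear term must be moved to the left-hand side, and one must check that the total coefficient they contribute is $<1$. For the commutator this is a genuine smallness condition on $\varepsilon$ (depending on $N$ through the number of terms generated by the Leibniz rule in \cref{lem:comm-LEstar}); for the nonlinearity, smallness is instead bought by the subinterval decomposition making $\|u\|_{L^5_tL^{10}_x}$ small, so one must be careful that the weighted $LE^*$ bound for $\partial^\alpha(u^5)$ genuinely carries a small prefactor (a power of $\|u\|_{L^5_tL^{10}_x}$ on each subinterval) rather than just a finite constant. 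Keeping the two absorptions compatible, and ensuring the chaining over subintervals does not reintroduce dependence on $T_2-T_1$, is the crux; everything else is routine Leibniz/Hölder bookkeeping. An alternative to induction on $N$ is to sum \eqref{eq:ILED0} over all $|\alpha|\le N$ at once and absorb using \eqref{eq:comm-LEstar} in aggregate, which is essentially the route taken in \cite{Looi2022}; I would present it inductively for clarity but the estimates are the same.
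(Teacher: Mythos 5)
Your plan matches the paper's intended argument: the paper supplies exactly the ingredients you use --- the order-0 inhomogeneous ILED, the Strichartz estimate \eqref{eq:strichartz0}, and the commutator bound \eqref{eq:comm-LEstar} --- and then defers the assembly to \cite[Section 4]{Looi2022}, which proceeds precisely by commuting derivatives, absorbing the $\varepsilon$-small top-order commutator contribution, and handling $\partial^\alpha(u^5)$ via the subinterval decomposition and bootstrap you describe. Your identification of the two absorption steps (smallness in $\varepsilon$ for the commutator; smallness of $\|u\|_{L^5_tL^{10}_x}$ per subinterval for the nonlinearity) as the crux is accurate and consistent with the cited proof.
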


This theorem, and a more general version for vector-fields of $u$ instead of pure derivatives, is proved in \cite[Section 4]{Looi2022}. 

The passage to strong solutions follows by approximation.

\end{document}